\documentclass[12pt]{amsart}

\usepackage{amsmath,amssymb,amsthm,mathtools}
\usepackage[a4paper,margin=2.5cm]{geometry}
\usepackage{enumitem,microtype}
\usepackage{xcolor}
\usepackage[hidelinks]{hyperref}
\usepackage{amsthm, color}
\allowdisplaybreaks
\usepackage{citeref}
\usepackage{tikz}
\usepackage{hyperref}

\newtheorem{theorem}{Theorem}[section]
\newtheorem{lemma}[theorem]{Lemma}

\newtheorem{corollary}[theorem]{Corollary}
\theoremstyle{definition}
\newtheorem{definition}[theorem]{Definition}
\newtheorem{remark}[theorem]{Remark}
\numberwithin{equation}{section}

\DeclareMathOperator{\Sec}{Sec}
\DeclareMathOperator{\Ric}{Ric}

\DeclareMathOperator{\Cut}{Cut}
\DeclareMathOperator{\Hess}{Hess}
\DeclareMathOperator{\tr}{tr}
\DeclareMathOperator{\osc}{osc}
\newcommand{\divg}{\operatorname{div}_{M}}
\newcommand{\R}{\mathbb R}
\newcommand{\dd}{\mathrm d}
\newcommand{\cL}{\mathcal L}
\newcommand{\Rm}{\operatorname{Rm}}

\definecolor{sourceblue}{RGB}{25,75,135}

\title[Gradient estimates for mean curvature equations]
{ Interior gradient estimates for mean curvature equations on Riemannian manifolds}

\begin{document}
	\author{Weiyang Li}
\address{School of Mathematical Sciences, University of Science and Technology of China, Hefei, P. R. China}
	\email{liweiyang@ustc.edu.cn}

	\author{Fanheng Xu}
\address{School of Mathematics (Zhuhai), Sun Yat-Sen University, 519082 Zhuhai, People's Republic of China}
	\email{xufh7@mail.sysu.edu.cn}
\thanks{Xu was supported by the National Natural Science Foundation of China (No.12201654)}

\subjclass[2020]{35B45, 35J93, 53C21}

\keywords{interior gradient estimates; mean
	curvature equation;  Hessian comparison; Liouville-type results.}

\begin{abstract}

In this paper, we study interior gradient estimates for solutions of
\[
 \divg\left(
 \frac{\nabla u}{\sqrt{1+|\nabla u|_g^2}}
 \right)
 =f(x,\nabla u)
\]
 on a complete Riemannian manifold of dimension $n\ge2$. 
 Under a Ricci curvature lower bound and a Hessian comparison
for the distance function, we derive polynomial and exponential
gradient bounds under structural conditions on $f$ and its
horizontal and vertical derivatives. The estimates quantify the dependence on the radius, the
oscillation of the solution, and the geometric and analytic
parameters. In the Euclidean case, we also improve the dependence
on the radius in the gradient estimate of
Xu~\cite[Theorem~1.1(d)]{Xu2026}.
As applications, we obtain Liouville-type results for entire
solutions under suitable curvature and growth assumptions.


\end{abstract}

\maketitle

\section{Introduction}\label{s1}

Let $(M,g)$ be a complete Riemannian manifold of dimension
$n\ge2$, and let $\Omega\subset M$ be open.
We consider
\begin{equation}\label{p1}
 \divg\left(
 \frac{\nabla u}{\sqrt{1+|\nabla u|_g^2}}
 \right)
 =f(x,\nabla u)
 \quad\text{in }\Omega,
\end{equation}
where the gradient and divergence are taken with respect
to $g$, and $f$ is a real-valued function on $TM$.
Our aim is to describe explicitly how the geometry of
$M$ and the dependence of $f$ on the base point and
the tangent vector affect interior gradient estimates.

Set
\begin{equation}\label{def-W}
 W=\sqrt{1+|\nabla u|_g^2}.
\end{equation}
For $u\in C^2(\Omega)$, its graph
\[
 \Sigma=\{(x,u(x)):x\in\Omega\}\subset M\times\mathbb R
\]
has mean curvature $H_\Sigma$ with respect to the upward
unit normal, with the sign convention
\begin{equation}\label{graph-H}
 nH_\Sigma
 =\divg\left(\frac{\nabla u}{W}\right)
 =\frac{1}{W}
  \left(
   \Delta u-\frac{\Hess u(\nabla u,\nabla u)}{W^2}
  \right);
\end{equation}
see \cite[Section~2]{Spruck2007}.
Thus \eqref{p1} prescribes $nH_\Sigma$.

In a local orthonormal frame, write
\[
 u_i=e_i(u),\quad
 u_{i;j}=\Hess u(e_i,e_j),
\]
with summation over repeated indices.
By \eqref{graph-H}, equation \eqref{p1} has the
nondivergence form
\begin{equation}\label{p1-nondiv}
 \cL^{ij}u_{i;j}=Wf(x,\nabla u),
 \quad
 \cL^{ij}=\delta^{ij}-\frac{u_i u_j}{W^2}.
\end{equation}
The geometric notation and curvature convention are
specified in Section~\ref{s21}.

We recall several interior gradient estimates for the prescribed mean
curvature equation obtained from \eqref{p1} by taking $f=H(x)$:
\begin{equation}\label{p-H}
 \divg\left(\frac{\nabla u}{\sqrt{1+|\nabla u|_g^2}}\right)
 =H(x).
\end{equation}
When $H=0$, \eqref{p-H} is the minimal surface equation.

In Euclidean space, interior gradient estimates for the minimal
surface equation were established by Finn~\cite{Finn1954,Finn1963}
in dimension two and by Bombieri, De Giorgi, and
Miranda~\cite{BDM1969} in higher dimensions.
For a $C^2$ solution on $B_R(x_0)\subset\R^n$, $n\ge2$, the
classical estimate takes the form
\begin{equation}\label{eq-Eu1}
 |\nabla u(x_{0})| \leq C_0\exp \left[
 \frac{C_1}{R}\sup\nolimits_{B_R(x_0)}\bigl(u-u(x_0)\bigr)\right],
 \quad C_0,C_1>0,
\end{equation}
where the constants depend only on $n$. Trudinger~\cite{Trudinger1972} gave another proof. For $H\ne0$, Heinz~\cite{Heinz1971}
obtained interior gradient estimates in dimension two under
a Lipschitz assumption on $H$.
Bombieri and Giusti~\cite{BombieriGiusti1973} established
local gradient bounds in higher dimensions under corresponding
regularity assumptions on $H$.

Trudinger~\cite{Trudinger1973} derived exponential gradient
estimates from mean value inequalities on the graph, with
constants depending on bounds for $H$ and its first derivatives.
Wang~\cite{Wang1998} used an auxiliary function to obtain exponential gradient estimates for
nonnegative solutions under bounds on $H$ and $\nabla H$.
Korevaar~\cite{Korevaar1986} also obtained an
exponential estimate using a normal variation argument.

On Riemannian manifolds, curvature enters both the
differentiated equation and the localization argument. For the minimal surface equation,
Rosenberg, Schulze, and Spruck
\cite{RosenbergSchulzeSpruck2013}
assumed
\[
 \Ric_g\ge0,\quad \Sec_g\ge-\mu^2,
 \quad \mu\ge0,
\]
and proved that every nonnegative solution on $B_R(p)$
satisfies
\[
 |\nabla u(p)|_g
 \le C_1\exp\left(
 C_2\frac{u(p)^2}{R^2}
 \left[1+(n-1)\mu R\coth(\mu R)\right]
 \right).
\]
Their argument treats the cut locus and does not
require the ball to lie within the injectivity radius.
More recently, Colombo, Magliaro, Mari, and Rigoli
\cite{ColomboEtAl2022} obtained a global gradient estimate
for positive entire minimal graphs under a Ricci curvature
lower bound alone.
In particular, if
\[
 \Ric_g\ge-(n-1)\kappa^2g
\]
on a complete manifold and $u>0$ is an entire solution
of the minimal surface equation, then
\[
 W\le\exp\bigl(\kappa\sqrt{n-1}\,u\bigr).
\]
For $H\neq 0$, Spruck~\cite{Spruck2007}
proved an exponential interior gradient estimate for
nonnegative solutions on balls whose radius is smaller
than the injectivity radius at the center. Specifically, Spruck proved
\begin{equation*}
 \begin{aligned}
 W(x_{0})\le
 32\max\left\{1,\left(\frac{u(x_{0})}{R}\right)^2\right\}
 \exp\!\left\{16Cu(x_{0})\right\}
 \exp\!\left\{16C\left(\frac{u(x_{0})}{R}\right)^2\right\}.
 \end{aligned}
\end{equation*}
The constant $C$ depends on the $C^1$ bounds for $H$,
a sectional curvature lower bound, and control of
the Laplacian of the squared distance.

For mean curvature equations whose right-hand side depends on $u$, consider
\begin{equation}\label{p-fu}
 \divg\left(\frac{\nabla u}{\sqrt{1+|\nabla u|_g^2}}\right)
 =f(x,u).
\end{equation}
In Euclidean domains, interior gradient bounds can
be obtained for bounded $C^3$ solutions of \eqref{p-fu}
under appropriate assumptions on $f$, including
$f_u\ge0$; see
\cite[Section~16.2]{GilbargTrudinger2001}.
Ma and Xu~\cite{MaXu2016} obtained gradient bounds for bounded
solutions of \eqref{p-fu} with Neumann boundary conditions, under
corresponding assumptions on $f$ and regularity bounds
on the boundary data.
Xu and Zhang~\cite{XuZhang2017} extended these estimates
to Riemannian domains, with constants also depending
on the domain geometry and bounds for the Ricci curvature.


In this paper, we extend Xu's work~\cite{Xu2026}
to complete Riemannian manifolds.
In \cite{Xu2026}, the author studied \eqref{p1}
in Euclidean space with $f=f(\nabla u)$ and obtained
polynomial and exponential estimates under
inequalities relating $f$ to its derivative.
Here we consider \eqref{p1} with $f=f(x,\nabla u)$
on Riemannian manifolds. We distinguish between a Ricci curvature lower bound,
which enters the Bochner identity, and a Hessian
comparison for the distance function, which controls
the cutoff terms.
The dependence on $x$ is measured by the horizontal
derivative of $f$, while its dependence on the tangent
vector is measured by the vertical derivative of $f$.

Let $d_g$ denote the Riemannian distance.
For $p\in M$, write
\[
 r(x)=d_g(p,x),\quad
 B_R(p)=\{x\in M:r(x)<R\}.
\]
Let $\Cut(p)$ denote the cut locus of $p$.
The function $r$ is smooth outside
$\{p\}\cup\Cut(p)$, and $r^2$ is smooth near $p$;
see \cite{Petersen2016}.

For $\mu\ge0$ and $s>0$, set
\begin{equation}\label{eq-comp1}
 c_\mu(s)=
 \begin{cases}
  \mu\coth(\mu s),&\mu>0,\\
  \frac{1}{s},&\mu=0.
 \end{cases}
\end{equation}
We formulate the distance Hessian comparison using
smooth upper supports (see \cite{Calabi1958Duke,MantegazzaEtAl2014,Petersen2016}) at the cut locus.
\begin{definition}\label{def-compare}
The distance function $r=d_g(p,\cdot)$ satisfies the
Hessian comparison on $B_R(p)$ with parameter
$\mu\ge0$ if the following conditions hold.

\begin{enumerate}
\item[(i)]
On $B_R(p)\setminus(\{p\}\cup\Cut(p))$,
\begin{equation}\label{eq-comp2}
 \Hess r\le c_\mu(r)(g-\dd r\otimes\dd r).
\end{equation}

\item[(ii)]
For every $q\in B_R(p)\cap\Cut(p)$ and every
sufficiently small $\varepsilon>0$, there is a smooth
function $r_{q,\varepsilon}$ on a neighborhood of $q$
contained in $B_R(p)$ such that
\begin{equation}\label{eq-supp}
 r_{q,\varepsilon}\ge r,\quad
 r_{q,\varepsilon}(q)=r(q),
\end{equation}
and, at $q$,
\begin{equation}\label{eq-comp3}
 \Hess r_{q,\varepsilon}
 \le
 \bigl(c_\mu(r(q))+\varepsilon\bigr)
 \bigl(g-\dd r_{q,\varepsilon}
          \otimes\dd r_{q,\varepsilon}\bigr)
 +\varepsilon g.
\end{equation}
\end{enumerate}
All tensor inequalities are understood as inequalities
of quadratic forms.
\end{definition}
\begin{remark}\label{rem1}
A sectional curvature lower bound
$\Sec_g\ge-\mu^2$ implies
Definition~\ref{def-compare} by Hessian comparison
and the usual displacement of the base point along
a minimizing geodesic; see \cite{Petersen2016}
and the proof of Corollary~\ref{cor1}.
Moreover, the upper-support property
\eqref{eq-supp} implies
\[
 |\nabla r_{q,\varepsilon}(q)|_g\le1,\quad
 g-\dd r_{q,\varepsilon}\otimes
       \dd r_{q,\varepsilon}\ge0
 \quad\text{at }q,
\]
as shown in Lemma~\ref{lem:cutoff}.
\end{remark}

We next recall the horizontal and vertical derivatives.
Their coordinate expressions and the corresponding
chain rule are given in Section~\ref{s22}.

\begin{definition}\label{def-f-deriv}
Let
\[
 0_M=\{(x,0_x):x\in M\}
\]
be the zero section, and let
$f\in C^1(TM\setminus0_M)$.
For $(x,\xi)\in TM\setminus0_M$, the vectors
\[
 \nabla^h f(x,\xi),\quad
 \nabla^v f(x,\xi)\in T_xM
\]
are defined by
\[
 \begin{aligned}
 \langle\nabla^h f(x,\xi),X\rangle_g
 &=
 \left.\frac{\dd}{\dd t}\right|_{t=0}
 f(\gamma(t),P_{0,t}\xi),\\
 \langle\nabla^v f(x,\xi),Y\rangle_g
 &=
 \left.\frac{\dd}{\dd t}\right|_{t=0}
 f(x,\xi+tY),
 \end{aligned}
\]
for $X,Y\in T_xM$.
Here $\gamma(0)=x$, $\dot\gamma(0)=X$, and
$P_{0,t}$ is parallel transport along $\gamma$
with respect to the Levi--Civita connection.
\end{definition}
In particular, wherever $\nabla u\ne0$,\[
X[f(\cdot,\nabla u)]
=
\langle\nabla^h f(x,\nabla u(x)),X\rangle_g
+
\langle\nabla^v f(x,\nabla u(x)),
        \nabla_X\nabla u\rangle_g.
\]
This identity separates the two types of derivatives
that enter the differentiated equation.

\begin{theorem}\label{thm-main}\rm
Let $(M,g)$ be a complete Riemannian manifold of
dimension $n\ge2$, and let $B_R(p)\subset\Omega$.
Assume that, for some $\kappa,\mu\ge0$,
\begin{equation}\label{eq-geometry}
 \Ric_g\ge-(n-1)\kappa^2g
 \quad\text{on }B_R(p),
\end{equation}
and that $r=d_g(p,\cdot)$ satisfies
Definition~\ref{def-compare} with parameter $\mu$.

Suppose
$f\in C^0(TM)\cap C^1(TM\setminus0_M)$ and
\begin{equation}\label{eq-f}
 \langle\nabla^h f(x,\xi),\xi\rangle_g
 \ge-\sigma_h|\xi|_g|f(x,\xi)|
 \quad\text{on }TB_R(p)\setminus0_M,
\end{equation}
where $\sigma_h\ge0$.

Let $u\in C^1(B_R(p))\cap
 C^3\bigl(B_R(p)\cap\{|\nabla u|_g>0\}\bigr)$
 be a weak solution of \eqref{p1} and the equation hold classically wherever $|\nabla u|_g>0$. Set
\begin{equation}\label{eq-scales}
 \begin{aligned}
 &L=\osc_{B_R(p)}u=\sup\nolimits_{B_R(p)}u-\inf\nolimits_{B_R(p)}u,\\
 &K(R,\mu)=\frac{1+\mu R}{R^2},\\
 &\Lambda(R,\mu,\kappa,\sigma_h)
 =K(R,\mu)+(n-1)\kappa^2+n\sigma_h^2.
 \end{aligned}
\end{equation}
All structural inequalities below are imposed on
$TB_R(p)\setminus0_M$, with $f$ and its derivatives
evaluated at $(x,\xi)$.

\begin{enumerate}
\item[(i)]
If $\theta>0$, $m_1,m_2>0$, and
\begin{equation}\label{A1}
 f^2-m_1|\xi|_g^2|\nabla^v f|_g^2
 \ge m_2|\xi|_g^{2\theta},
\end{equation}
then
\begin{equation}\label{eq:a-grad}
 |\nabla u(p)|_g
 \le
 C\bigl[\Lambda(R,\mu,\kappa,\sigma_h)\bigr]^{
       \frac{1}{2\theta}},
 \quad C=C(n,m_1,m_2,\theta).
\end{equation}

\item[(ii)]
If $\theta>1$, $m_1,m_2,m_3>0$, and
\begin{equation}\label{A2}
 \begin{cases}
 f^2-m_1(1+|\xi|_g^2)\log(1+|\xi|_g^2)
       |\nabla^v f|_g^2
 \ge m_2\log^{2\theta}(1+|\xi|_g^2),\\
 |f|\le m_3\log^\theta(1+|\xi|_g^2),
 \end{cases}
\end{equation}
then
\begin{equation}\label{eq:b-grad}
 \begin{aligned}
 |\nabla u(p)|_g^2
 \le C\max\Big\{
 &\exp\Big(
 C\bigl[\Lambda(R,\mu,\kappa,\sigma_h)\bigr]^{
          \frac{1}{2(\theta-1)}}
 \Big)-1,\ \ \bigl[\Lambda(R,\mu,\kappa,\sigma_h)\bigr]^{
          \frac{1}{2\theta-1}}
 \Big\},
 \end{aligned}
\end{equation}
where $C=C(n,m_1,m_2,m_3,\theta)$.

\item[(iii)]
If $0<\theta\le1$, $m_1>0$, and
\begin{equation}\label{A3}
 |f(x,\xi)|=m_1\log^\theta(1+|\xi|_g^2),
\end{equation}
then $\nabla^h f=0$.
For every $0<\beta<1$,
\begin{equation}\label{eq:c-grad}
 \begin{aligned}
 |\nabla u(p)|_g
 \le\max\Big\{
 &C_1\bigl[(L+1)\Lambda(R,\mu,\kappa,0)\bigr]^{
          \frac{1}{2\theta}},\ \ C_2\bigl[(L+1)\Lambda(R,\mu,\kappa,0)\bigr]^{
          \frac{1}{1-\beta}}
 \Big\},
 \end{aligned}
\end{equation}
where $C_1,C_2$ depend only on $n,m_1,\theta,\beta$.

\item[(iv)]
If $m_1,m_2>0$ and
\begin{equation}\label{A4}
 \begin{cases}
 f^2-m_1(1+|\xi|_g^2)^2\log(1+|\xi|_g^2)
       |\nabla^v f|_g^2\ge0,\\
 |f|\le m_2,
 \end{cases}
\end{equation}
then
\begin{equation}\label{eq:d-grad}
	\begin{aligned}
 |\nabla u(p)|_g^2
 \le\max\Big\{
 &\exp\!\bigl(C(L+1)^2\Lambda(R,\mu,\kappa,\sigma_h)\bigr)-1,\\
 &\quad \exp\!\bigl(C(L+1)^2\left[\Lambda(R,\mu,\kappa,\sigma_h)\right]^{\frac{1}{2}}\bigr)-1
 \Big\},
	\end{aligned}
\end{equation}
where $C=C(n,m_1,m_2)$.

\item[(v)]
If $f\equiv0$, then
\begin{equation}\label{eq:e-grad}
 |\nabla u(p)|_g^2
 \le \exp\!\bigl(CL^2\Lambda(R,\mu,\kappa,0)\bigr)-1
\end{equation}
where $C=C(n)$.
\end{enumerate}
\end{theorem}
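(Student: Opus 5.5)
The plan is a maximum principle argument for a localized auxiliary function built from $W$ or $\log W$, using the nondivergence form \eqref{p1-nondiv}. First I would derive a Bochner-type inequality for the linearized operator $\cL^{ij}\nabla_i\nabla_j$ applied to $\varphi(W)$ wherever $|\nabla u|_g>0$. Differentiate \eqref{p1-nondiv} in the direction $\nabla u$ and use the chain rule from Definition~\ref{def-f-deriv}. This gives
\[
\cL^{ij}W_{;ij}\ \ge\ \frac{|A|^2W}{1}\,(\text{second fundamental form term}) - \Ric(\nabla u,\nabla u)/W + \frac{\langle\nabla^h f,\nabla u\rangle}{1}+\langle \nabla^v f,\nabla W\rangle\cdot(\ldots)+\ldots,
\]
with the schematic details to be fixed in the computation. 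The Ricci term is bounded using \eqref{eq-geometry} by $(n-1)\kappa^2|\nabla u|^2/W$. The horizontal term is bounded by \eqref{eq-f} as $-\sigma_h|\nabla u||f|$.

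The key algebraic point is the Cauchy–Schwarz inequality $|A|^2\ge (nH)^2/n = W^{-2}\cdot(\ldots)f^2/n$. The term $n\sigma_h^2$ in $\Lambda$ comes from absorbing $\sigma_h|\xi||f|$ into $f^2/(2n)$ plus $n\sigma_h^2|\xi|^2/2$. The vertical-derivative terms, which are of the form $\langle\nabla^v f,\nabla_{\cdot}\nabla u\rangle$, are absorbed into the remaining part of $|A|^2$. This absorption is exactly what the combinations $f^2-m_1(\cdots)|\nabla^v f|^2$ in \eqref{A1}, \eqref{A2} and \eqref{A4} are designed for. The weight $(\cdots)$ depends on the choice of $\varphi$:
\begin{itemize}
\item[(i)] $\varphi=W$, or a power of $W$;
\item[(ii)] $\varphi=\log W$;
\item[(iv), (v)] $\varphi=\log W$ combined with a $u$-dependent factor.
\end{itemize}
After absorption one obtains a good term bounded below by $c\,m_2|\nabla u|^{2\theta}$ in case (i), and by $c\log^{2\theta}W$ in case (ii).

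Next, localize with $\eta=(R^2-r^2)_+$ or $\eta=\phi(r)$, and maximize $Z=\eta^{a}\varphi(W)$, multiplied in cases (iii)–(v) by a factor $h(u)$ such as $e^{Cu}$ or $(1+u-\inf u)^{\pm}$. At an interior maximum point $x_1$:
\begin{itemize}
\item If $|\nabla u(x_1)|=0$, the bound is trivial.
\item If $x_1\in\Cut(p)$, replace $r$ by the upper support $r_{q,\varepsilon}$ of Definition~\ref{def-compare}(ii). This preserves the maximum and lets us apply the comparison \eqref{eq-comp3} with error $\varepsilon\to0$; Lemma~\ref{lem:cutoff} ensures $|\nabla r_{q,\varepsilon}|\le1$.
\end{itemize}
The Hessian comparison \eqref{eq-comp2} gives
\[
\cL^{ij}(r^2)_{;ij}\le 2+2(n-1)rc_\mu(r)\le C(1+\mu R),
\]
which is the origin of $K(R,\mu)$. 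The first-order conditions $\nabla Z=0$ convert $\nabla\varphi$ terms into $\nabla\eta/\eta$ terms. Combining with the Bochner inequality yields an algebraic inequality of the form
\[
\eta^2\cdot(\text{good term})\le C\bigl(K+(n-1)\kappa^2+n\sigma_h^2\bigr)R^{4}\cdot(\ldots).
\]
Solving this for $|\nabla u(x_1)|$ gives \eqref{eq:a-grad} in case (i). In case (ii) it gives \eqref{eq:b-grad}, with the two regimes depending on whether $\log W$ is large or small; the upper bound $|f|\le m_3\log^\theta$ controls the cross term $Wf\,\varphi'$ coming from $\cL^{ij}u_{ij}$.

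In cases (iii)–(v) the good term from $f$ is weak or absent, so the factor $h(u)$ is needed. Differentiating $h(u)$ produces $\cL^{ij}u_{ij}h'=Wfh'$ and a positive term $h''\cL^{ij}u_iu_j=h''|\nabla u|^2/W^2$. This is the mechanism of the classical minimal-surface estimate, and it produces the dependence on $L$. Case (iii) needs a separate observation: since $|f|$ depends only on $|\xi|$, and parallel transport preserves length, $\nabla^h f=0$. The restriction $\theta\le1$ is then handled by a Young inequality with exponent $\beta$ between the $f$ cross term and the $h$ term, which yields the two powers in \eqref{eq:c-grad}. For case (v), take $f=0$ and $h(u)=\exp(\ldots)$ with $\varphi=\log W$, following Rosenberg–Schulze–Spruck/Korevaar, which gives \eqref{eq:e-grad}.

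I expect the main obstacle to be the absorption of the vertical-derivative terms. This requires a careful decomposition of $\cL^{ij}\cL^{kl}u_{ik}u_{jl}$ into the $\nabla W$ direction and its complement, matched to the precise weights $|\xi|^2$, $(1+|\xi|^2)\log$ and $(1+|\xi|^2)^2\log$ in the hypotheses. A related difficulty is tracking the constants so that only $\Lambda$ appears, with the correct exponents.
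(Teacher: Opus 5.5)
\textbf{Main gap: part (iii).} The mechanism you propose there does not work. Under \eqref{A3} the good term from $f^2$ grows only like a power of $\log(1+z)$, where $z=|\nabla u|_g^2$.

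The term $Wf\,h'$ from $\cL\log h(u)=\frac{h'}{h}\cL u+\dots$ does not help either. At the maximum point the first-order condition $\nabla z=-\frac{F}{F'}\bigl(\frac{h'}{h}\nabla u+\frac{\nabla\phi}{\phi}\bigr)$ makes it cancel against the $f\nabla W$ part of $\langle\nabla u,\nabla(Wf)\rangle_g$. Only $\frac{h'}{h}\frac fW$ survives, and this decays in $z$. The terms generated by $h''$ are bounded in $z$, or grow at most logarithmically when $F=\log(1+z)$.

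The dominant $z$-dependent term is the $h'$-part of the vertical-derivative term. Since $\nabla^v f=\frac{2\theta f}{W^2\log(1+z)}\nabla u$, substituting the first-order condition into $\frac{F'}{F}W\langle\nabla^v f,\nabla z\rangle_g$ produces
\[
-\frac{h'}{h}\,\frac{2\theta f\,z}{W\log(1+z)},
\]
of size $\bigl|\frac{h'}{h}\bigr|\sqrt z\,\log^{\theta-1}(1+z)$. Nothing else in the inequality can absorb it, and its sign is that of $-fh'$. With a fixed factor such as $h=e^{Cu}$, the argument therefore fails for one of the two signs of $f$.

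The missing idea is the following. Because $|f|>0$ off the zero section, $f(x,\nabla u)=\delta m_1\log^\theta(1+z)$ with a fixed $\delta\in\{\pm1\}$ on the connected component of $B_{R_0}(p)\cap\{|\nabla u|_g>0\}$ containing $p$. Your observation that $\nabla^h f=0$ rests on the same fact. One then chooses the $u$-factor decreasing if $\delta=1$ and increasing if $\delta=-1$; the paper takes $(L+1+u-\inf u)^{-1/b}$ or $(L+1+\sup u-u)^{-1/b}$. This turns the term above into the good term
\[
2m_1\theta\Bigl|\frac{h'}{h}\Bigr|\frac zW\log^{\theta-1}(1+z)\ \ge\ \frac{c}{L+1}\,\frac zW\log^{\theta-1}(1+z).
\]
With $F(z)=z$ this gives the polynomial bound. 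The parameter $\beta$ enters only through $\log^{1-\theta}(1+z)\le C_{\theta,\beta}z^{\beta/2}$ for $z\ge1$, not through a Young inequality between the $f$ cross term and the $h$ term.

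\textbf{Two smaller corrections.} First, the vertical-derivative terms are not absorbed into a leftover part of $|\Hess u|_g^2$. For the functions that work, $F(z)=z$ and $F(z)=\log(1+z)$, the coefficient $P_F$ of $|\nabla z|_g^2$ is negative after combining Bochner with the concavity of $\log F$. There is no spare positive multiple of $|\nabla z|_g^2$ to absorb $\langle\nabla^v f,\nabla z\rangle_g$ in a general direction. What actually happens is:
\begin{itemize}
\item the first-order condition converts $\nabla z$ into $\frac{h'}{h}\nabla u+\frac{\nabla\phi}{\phi}$;
\item Young's inequality then produces $\varepsilon W^2|\nabla^v f|_g^2$, or $\varepsilon W^4|\nabla^v f|_g^2$ when $h'\ne0$ as in (iv);
\item this is absorbed into the term $\frac2n\frac{F'}{F}W^2f^2$ coming from $|\Hess u|_g^2\ge G^2/n$.
\end{itemize}
The weights in \eqref{A1}, \eqref{A2}, \eqref{A4} are exactly these coefficient ratios, so no decomposition of $\cL^{ij}\cL^{kl}u_{i;k}u_{j;l}$ is needed.

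Second, the same bookkeeping shows that $\varphi=W$ or a power of $W$ is the wrong choice in (i). It has $F'/F\sim W^{-2}$. Closing the argument with the stated exponent would then require $f^2\ge m_1(1+|\xi|_g^2)|\nabla^v f|_g^2$, which fails near $\xi=0$ even for $f=c|\xi|_g^\theta$. You need $F(z)=z=|\nabla u|_g^2$. It also vanishes on $\{\nabla u=0\}$, which keeps the maximum in the region where $u\in C^3$ and $f\in C^1$.
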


A sectional curvature lower bound gives both
geometric assumptions in Theorem~\ref{thm-main}.

\begin{corollary}\label{cor1}\rm
In Theorem~\ref{thm-main}, replace the Ricci curvature
and distance Hessian assumptions by
\[
 \Sec_g\ge-\rho^2
 \quad\text{on }B_R(p),
 \quad \rho\ge0.
\]
Then all conclusions hold with
\[
 \kappa=\mu=\rho,\quad
 \Lambda(R,\rho,\rho,\sigma_h)
 =\frac{1+\rho R}{R^2}
  +(n-1)\rho^2+n\sigma_h^2.
\]
In particular, on $\mathbb R^n$, if $\sigma_h=0$
is admissible in \eqref{eq-f}, then
\[
 \Lambda(R,0,0,0)=\frac{1}{R^2}.
\]
\end{corollary}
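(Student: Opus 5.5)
The plan is to verify the two geometric hypotheses of Theorem~\ref{thm-main} with $\kappa=\mu=\rho$ and then apply the theorem verbatim. The formula for $\Lambda$ then follows by substituting into \eqref{eq-scales}.

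First, the Ricci bound. Since $\Sec_g\ge-\rho^2$ on $B_R(p)$, for any unit $v\in T_xM$ we sum the sectional curvatures of the planes spanned by $v$ and an orthonormal completion $e_2,\dots,e_n$. This gives $\Ric_g(v,v)\ge-(n-1)\rho^2$, which is \eqref{eq-geometry} with $\kappa=\rho$.

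Second, condition (i) of Definition~\ref{def-compare}. At $x\in B_R(p)\setminus(\{p\}\cup\Cut(p))$, the unique minimizing geodesic from $p$ to $x$ contains no conjugate points and lies in $B_R(p)$, where the curvature bound holds. The standard Hessian comparison theorem (Rauch, or the Riccati comparison along the geodesic) then gives $\Hess r\le c_\rho(r)(g-\dd r\otimes\dd r)$. This uses that $\sinh$ and $s\mapsto s$ solve the model Jacobi equations, so $c_\rho=\mathrm{sn}_\rho'/\mathrm{sn}_\rho$ matches \eqref{eq-comp1}.

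Third, condition (ii), the upper supports at cut points, which I expect to be the main step. Let $q\in B_R(p)\cap\Cut(p)$ and fix a minimizing unit-speed geodesic $\gamma$ from $p$ to $q$. For small $\delta>0$ set $p_\delta=\gamma(\delta)$ and
\[
 r_{q,\varepsilon}(x)=\delta+d_g(p_\delta,x).
\]
By the triangle inequality $r_{q,\varepsilon}\ge r$, and equality holds at $q$. Since $q$ is not a cut point of $p_\delta$ (the segment $\gamma|_{[\delta,r(q)]}$ is the interior of a minimizing segment), $d_g(p_\delta,\cdot)$ is smooth near $q$, and its gradient there is $\dot\gamma(r(q))$.

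Applying the Hessian comparison from the second step with base point $p_\delta$, at $q$ we get
\[
 \Hess r_{q,\varepsilon}\le c_\rho(r(q)-\delta)\bigl(g-\dd r_{q,\varepsilon}\otimes\dd r_{q,\varepsilon}\bigr).
\]
This requires the curvature bound along the segment from $p_\delta$ to $q$, which lies in $B_R(p)$. Since $c_\rho$ is continuous on $(0,\infty)$, choosing $\delta$ small makes $c_\rho(r(q)-\delta)\le c_\rho(r(q))+\varepsilon$. The tensor $g-\dd r_{q,\varepsilon}\otimes \dd r_{q,\varepsilon}$ is nonnegative because $|\nabla r_{q,\varepsilon}|=1$, so \eqref{eq-comp3} follows, even without the extra $\varepsilon g$.

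We also need the neighborhood of $q$ to lie in $B_R(p)$. This holds after shrinking, since $B_R(p)$ is open.

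One subtlety in the third step: $\Sec_g\ge-\rho^2$ is assumed only on $B_R(p)$. All geodesics used run from $p$ or $p_\delta$ to points of $B_R(p)$ and are minimizing from $p$, so they stay in $B_R(p)$, and the comparison argument only uses curvature along them. With both hypotheses verified, Theorem~\ref{thm-main} applies with $\kappa=\mu=\rho$.

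The Euclidean statement is the case $\rho=0$ with $\sigma_h=0$: then $K(R,0)=1/R^2$ and $\Lambda(R,0,0,0)=1/R^2$.
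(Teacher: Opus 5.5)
Your proposal is correct and follows the paper's proof essentially step for step. Both arguments obtain the Ricci bound by summing sectional curvatures and use the smooth Hessian comparison off $\{p\}\cup\Cut(p)$. At cut points, both use the upper support $\delta+d_g(\gamma(\delta),\cdot)$ with $c_\rho(r(q)-\delta)\le c_\rho(r(q))+\varepsilon$, noting that the segment from $\gamma(\delta)$ to $q$ stays in $B_R(p)$.

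One point of wording needs fixing. $q$ is an endpoint of $\gamma$, not an interior point. The correct reason for $q\notin\Cut(\gamma(\delta))$ is that $\gamma|_{[\delta,r(q)]}$ extends minimizingly beyond its initial point, so $\gamma(\delta)$ is interior to the minimizing segment from $q$ to $p$. This is the fact the paper takes from Petersen's Lemma~7.1.9.
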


For entire solutions, letting the radius tend to
infinity gives the following Liouville-type result.

\begin{corollary}\label{cor2}\rm
Let $(M,g)$ be a complete, connected, noncompact
Riemannian manifold of dimension $n\ge2$ satisfying
\[
 \Ric_g\ge0,\quad \Sec_g\ge-\rho^2
\]
for some fixed $\rho\ge0$.
Suppose
\[
 f\in C^0(TM)\cap C^1(TM\setminus0_M),\quad
 \langle\nabla^h f(x,\xi),\xi\rangle_g\ge0
 \quad\text{on }TM\setminus0_M,
\]
and let $u\in C^3(M)$ solve \eqref{p1} on $M$.

Fix $p\in M$ and write $L=\osc_{B_R(p)}u$.
In each case below, the structural condition holds
on $TM\setminus0_M$, with the parameter ranges of
Theorem~\ref{thm-main} and constants independent
of $x$ and $\xi$.
Then $u$ is constant under any of the following
assumptions:
\begin{enumerate}
\item[(i)]
Condition \eqref{A1} holds.

\item[(ii)]
Condition \eqref{A2} holds.

\item[(iii)]
Condition \eqref{A3} holds and, as $R\to\infty$,
\[
 L=
 \begin{cases}
 o(R),&\rho>0,\\
 o(R^2),&\rho=0.
 \end{cases}
\]

\item[(iv)]
Condition \eqref{A4} holds and, as $R\to\infty$,
\[
 L=
 \begin{cases}
 o\bigl(R^{\frac{1}{4}}\bigr),&\rho>0,\\
 o\bigl(R^{\frac{1}{2}}\bigr),&\rho=0.
 \end{cases}
\]

\item[(v)]
We have $f\equiv0$ and, as $R\to\infty$,
\[
 L=
 \begin{cases}
 o\bigl(R^{\frac{1}{2}}\bigr),&\rho>0,\\
 o(R),&\rho=0.
 \end{cases}
\]
\end{enumerate}
\end{corollary}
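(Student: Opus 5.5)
Corollary~\ref{cor2} follows from Theorem~\ref{thm-main}, in the form given by Corollary~\ref{cor1}, by letting $R\to\infty$. Since $\Sec_g\ge-\rho^2$, Corollary~\ref{cor1} supplies the Hessian comparison with $\mu=\rho$. Since $\Ric_g\ge0$, we may take $\kappa=0$ in \eqref{eq-geometry}. Since $\langle\nabla^h f,\xi\rangle_g\ge0$, we may take $\sigma_h=0$ in \eqref{eq-f}. We do not use the full Corollary~\ref{cor1}, which would force $\kappa=\rho$ and leave a nonvanishing term $(n-1)\rho^2$ in $\Lambda$. Instead we combine the two geometric inputs separately and obtain
\[
 \Lambda=\Lambda(R,\rho,0,0)=\frac{1+\rho R}{R^2},
\]
which is $\sim\rho/R$ if $\rho>0$ and equals $1/R^2$ if $\rho=0$. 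In either case $\Lambda\to0$ as $R\to\infty$. The structural constants do not depend on $x$ or $\xi$, so the constants in Theorem~\ref{thm-main} are uniform in $R$. Because $u\in C^3(M)$ solves \eqref{p1} classically on all of $M$, the regularity hypotheses hold on every ball $B_R(p)\subset M$. The base point $p$ is arbitrary, so it suffices to show $\nabla u(p)=0$ for each $p$; connectedness of $M$ then gives that $u$ is constant.

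We treat the five cases in turn. In case (i), \eqref{eq:a-grad} gives $|\nabla u(p)|_g\le C\Lambda^{1/(2\theta)}\to0$. In case (ii), both terms in \eqref{eq:b-grad} tend to $0$, since $\exp(C\Lambda^{1/(2(\theta-1))})-1\to0$ and $\Lambda^{1/(2\theta-1)}\to0$; here we use $\theta>1$. In case (iii), \eqref{eq:c-grad} requires $(L+1)\Lambda\to0$. If $\rho=0$ this means $L=o(R^2)$. If $\rho>0$, then $\Lambda\sim\rho/R$ and we need $L=o(R)$. Both are exactly the hypotheses, and the exponents $1/(2\theta)$ and $1/(1-\beta)$ are positive. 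In case (v), \eqref{eq:e-grad} requires $L^2\Lambda\to0$. This is $L=o(R)$ when $\rho=0$ and $L=o(R^{1/2})$ when $\rho>0$, matching the statement.

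Case (iv) needs the most care, and it is where I expect the only real bookkeeping obstacle. The bound \eqref{eq:d-grad} contains the two terms $(L+1)^2\Lambda$ and $(L+1)^2\Lambda^{1/2}$. For $\Lambda<1$, the second term dominates, so we need $(L+1)^2\Lambda^{1/2}\to0$. If $\rho>0$, then $\Lambda^{1/2}\sim R^{-1/2}$, and the condition becomes $L=o(R^{1/4})$. If $\rho=0$, then $\Lambda^{1/2}=R^{-1}$, and the condition becomes $L=o(R^{1/2})$. These are again the stated hypotheses.

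One subtlety arises in cases (iii)--(v). The quantity $L+1$ is at least $1$, so the conditions stated in terms of $L$ must be converted into conditions on $L+1$. This conversion works because the required rates, $o(R^a)$ with $a>0$, are also satisfied by the constant $1$. In every case $|\nabla u(p)|_g$ is bounded by a quantity that tends to zero as $R\to\infty$ while $p$ stays fixed, so $\nabla u(p)=0$, completing the plan.
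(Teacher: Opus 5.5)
Your proposal is correct and follows the paper's argument: you take the Hessian comparison with $\mu=\rho$ from the proof of Corollary~\ref{cor1}, set $\kappa=0$ and $\sigma_h=0$ so that $\Lambda=\frac{1+\rho R}{R^2}\to0$, and read off each case from Theorem~\ref{thm-main}. The one step to make explicit is that in (iii)--(v) the growth hypothesis is tied to the fixed center $p$. To get $\nabla u(q)=0$ at an arbitrary $q$, note that $\osc_{B_R(q)}u\le\osc_{B_{R+d_g(p,q)}(p)}u$, which preserves the $o(R^a)$ rates; the paper does the same thing via $B_{R/2}(q)\subset B_R(p)$.
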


\begin{remark}\label{rem2}
	We explain the geometric and analytic assumptions
in Theorem~\ref{thm-main}.
\begin{itemize}
\item[(1)] 
A Ricci curvature lower bound gives the Laplacian
comparison
\[
 \Ric_g\ge-(n-1)\kappa^2g
 \quad\Longrightarrow\quad
 \Delta r\le(n-1)c_\kappa(r)
\]
away from the base point and its cut locus;
see \cite{Petersen2016}.
For $n\ge3$, it does not in general give the
tensor inequality \eqref{eq-comp2}.
The distinction matters because the cutoff calculation
contains
\[
 \cL^{ij}r_{i;j}
 =
 \Delta r
 -\frac{\Hess r(\nabla u,\nabla u)}{W^2},
\]
rather than $\Delta r$ alone.
The distance Hessian assumption is used to control
this contraction in the present localization argument.


\item[(2)] Condition \eqref{eq-f} only bounds the component of
$\nabla^h f$ in the direction $\xi$.
In particular,
\[
 \langle\nabla^h f(x,\xi),\xi\rangle_g\ge0
 \quad\Longrightarrow\quad
 \sigma_h=0\ \text{is admissible},
\]
even when $\nabla^h f\ne0$.

For example,  on $B_R(0)\subset\R^n$, take
\[
 f(x,\xi)=c|\xi|+\lambda x\cdot\xi,
 \quad \lambda>0,\quad c>\lambda R.
\]
Then
\[
\begin{aligned}
 \nabla^hf=\lambda\xi,\quad
 \langle\nabla^hf,\xi\rangle=\lambda|\xi|^2\ge0,
\end{aligned}
\]
and
\[
\begin{aligned}
 f^2-m_1|\xi|^2|\nabla^vf|^2
 \ge\bigl[(c-\lambda R)^2-m_1(c+\lambda R)^2\bigr]|\xi|^2.
\end{aligned}
\]
Thus \eqref{A1} holds with $\theta=1$ and
\[
 m_2=(c-\lambda R)^2-m_1(c+\lambda R)^2>0
\]
whenever
\[
 0<m_1<
 \left(\frac{c-\lambda R}{c+\lambda R}\right)^2.
\]
Moreover, $\sigma_h=0$ is admissible although
$\nabla^h f\ne0$.


\item [(3)] 
We give examples satisfying the structural conditions of Theorem~\ref{thm-main}.
Suppose that
\[
 \Phi\in C([0,\infty))\cap C^1((0,\infty)),
 \quad f(x,\xi)=\Phi(|\xi|_g).
\]
Since parallel transport preserves the norm,
\[
 \nabla^h f=0,\quad
 \nabla^v f
 =\Phi'(|\xi|_g)\frac{\xi}{|\xi|_g},\quad
 |\nabla^v f|_g=|\Phi'(|\xi|_g)|.
\]
Thus the structural conditions reduce to inequalities in one variable.
\begin{itemize}
\item [$\bullet$]For $\Phi(t)=ct^\theta$, $c,\theta>0$,
\[
 \Phi(t)^2-m_1t^2|\Phi'(t)|^2
 =c^2(1-m_1\theta^2)t^{2\theta}.
\]
Condition \eqref{A1} therefore holds with
\[
 0<m_1<\frac{1}{\theta^2},\quad
 m_2=c^2(1-m_1\theta^2).
\]

\item[$\bullet$] 
For $\Phi(t)=c\sqrt{1+t^2}$, with $c>0$ and $0<m_1<1$,
condition \eqref{A1} holds with $\theta=1$ and
$m_2=c^2(1-m_1)$.
This form also arises, after rescaling the dependent variable,
in the elliptic regularization of inverse mean curvature flow;
see \cite{HuiskenIlmanen2001}.

\item[$\bullet$] For $\Phi(t)=c\log^\theta(1+t^2)$, $c,\theta>0$,
\[
 \frac{(1+t^2)\log(1+t^2)|\Phi'(t)|^2}
      {\Phi(t)^2}
 =
 \frac{4\theta^2t^2}
      {(1+t^2)\log(1+t^2)}
 \le4\theta^2.
\]
If $\theta>1$, condition \eqref{A2} holds with
\[
 0<m_1<\frac{1}{4\theta^2},\quad
 m_2=c^2(1-4m_1\theta^2),\quad m_3=c.
\]
If $0<\theta\le1$, condition \eqref{A3} holds
with $m_1=c$.

\item[$\bullet$]For $
 \Phi(t)=\frac{ct}{\sqrt{1+t^2}}$, $c>0$,
we have
\[
 |\Phi'(t)|=\frac{c}{(1+t^2)^{\frac{3}{2}}},
 \quad |\Phi(t)|\le c,
\]
and
\[
 \begin{aligned}
 &\Phi(t)^2
 -m_1(1+t^2)^2\log(1+t^2)|\Phi'(t)|^2\\
 &\quad=
 \frac{c^2}{1+t^2}
 \bigl[t^2-m_1\log(1+t^2)\bigr]
 \ge0
 \end{aligned}
\]
whenever $0<m_1\le1$.
Thus \eqref{A4} holds with $m_2=c$.

\end{itemize}
\end{itemize}
\end{remark}

\begin{remark}\label{rem3}
In the Euclidean case $M=\mathbb R^n$ and $f=f(\xi)$,
part \textup{(iv)} of Theorem~\ref{thm-main} gives,
under \eqref{A4},
\[
 \log(1+|\nabla u(0)|^2)
 \le C(L+1)^2\max\left\{\frac{1}{R^2},\frac{1}{R}\right\}.
\]
For comparison, \cite[Theorem~1.1(d)]{Xu2026} gives
\[
 \log(1+|\nabla u(0)|^2)
 \le C(L+1)^2
 \max\left\{
 \frac{1}{R^2},\frac{1}{R^{\frac{2}{3}}}
 \right\}.
\]
Consequently, our estimate yields constancy of entire
solutions under
\[
 L=o(R^{\frac{1}{2}}),
\]
whereas the latter estimate yields constancy under
$L=o(R^{\frac{1}{3}})$.
Thus the improvement concerns the admissible growth
of the oscillation as the radius tends to infinity.
Under the geometric assumptions of
Corollary~\ref{cor2} with $\rho>0$, the corresponding
condition is $L=o(R^{\frac{1}{4}})$.
\end{remark}

\begin{remark}\label{rem4}
The distance Hessian comparison may be sharper than
the bound obtained from a uniform sectional curvature
lower bound.

Consider
\[
 (M,g)=
 \left(
 \mathbb R^n,
 \dd r^2+\psi(r)^2g_{\mathbb S^{n-1}}
 \right),
 \quad
 \psi(r)=\frac{r}{1+r^2},
\]
with $p$ the origin.
The metric extends smoothly across $p$ and is complete.
The point $p$ is a pole, and the warped product formulas
give
\[
 \Hess r
 =
 \frac{\psi'}{\psi}
 (g-\dd r\otimes\dd r)
 =
 \frac{1-r^2}{r(1+r^2)}
 (g-\dd r\otimes\dd r)
 \le
 \frac{1}{r}(g-\dd r\otimes\dd r);
\]
see \cite{Petersen2016}.
Thus Definition~\ref{def-compare} holds with $\mu=0$.

On the other hand,
\[
 K_{\mathrm{rad}}
 =
 -\frac{\psi''}{\psi}
 =
 -\frac{1}{8}
 +\frac{(r^2-7)^2}{8(1+r^2)^2}
 \ge-\frac{1}{8}.
\]
For tangential two-planes, when $n\ge3$,
\[
 K_{\mathrm{tan}}
 =
 \frac{1-(\psi')^2}{\psi^2}
 =
 \frac{6+5r^2+4r^4+r^6}{(1+r^2)^2}>0.
\]
Hence the optimal uniform sectional curvature lower
bound is $-\frac{1}{8}$.
Using this bound alone would give the comparison
parameter $\mu=\frac{1}{2\sqrt{2}}$, whereas the direct
calculation allows $\mu=0$.

Theorem~\ref{thm-main} therefore permits the choices
\[
 \mu=0,\quad \kappa=\frac{1}{2\sqrt{2}},
 \quad K(R,0)=\frac{1}{R^2}.
\]
This improves the distance contribution while
retaining the Ricci curvature contribution.
\end{remark}

\begin{remark}\label{rem5}
	We make three observations concerning Corollary~\ref{cor2}.
\begin{enumerate}
\item[(1)]
In parts \textup{(iii)--(v)} of
Corollary~\ref{cor2}, replacing the little-$o$
conditions by the corresponding big-$O$ conditions
gives
\[
 \sup\nolimits_M|\nabla u|_g<\infty.
\]
Indeed, applying the estimates on $B_{R/2}(q)$
and using $B_{R/2}(q)\subset B_R(p)$ for large $R$
gives a bound independent of $q$.
In particular, an entire minimal graph over
$\mathbb R^n$ satisfying
\[
 \osc_{B_R(0)}u=O(R)
\]
has bounded gradient and is affine by
\cite{Moser1961}.
Without a growth assumption, all entire minimal
graphs are affine for $n\le7$;
see
\cite{Almgren1966,Bernstein1927,DeGiorgi1965,Fleming1962,
Simons1968}.
Nonaffine examples exist for $n\ge8$;
see \cite{BombieriDeGiorgiGiusti1969}.

\item[(2)]
The minimal case of Corollary~\ref{cor2} is a
consequence of the present local estimates.
Stronger global results are available under
$\Ric_g\ge0$ alone:
positive entire solutions are constant by
\cite{ColomboEtAl2022}, and sublinear growth of
the negative part suffices by \cite{Ding2025}.
Thus the sectional curvature assumption in our
corollary belongs to the distance-cutoff argument
used here.


\item[(3)]Any constant solution satisfies $f(x,0_x)=0$ for every $x\in M$.
Consequently, under the remaining assumptions of
Corollary~\ref{cor2}, no entire solution exists if
$f(x,0_x)\ne0$ at some point.
\end{enumerate}
\end{remark}

The paper is organized as follows.
Section~\ref{s2} develops the horizontal and vertical
derivative formulas, the elliptic properties of
$(\cL^{ij})$, the Bochner inequality, and the
distance cutoff estimates.
Section~\ref{s3} proves Theorem~\ref{thm-main}.
Section~\ref{s4} proves
Corollaries~\ref{cor1} and~\ref{cor2}.

\section{Preliminaries}\label{s2}

\subsection{Geometric notation and conventions}\label{s21}
Let $\nabla$ denote the Levi--Civita connection of $g$.
For a $C^{2}$ function $u$, we write
\[
\begin{aligned}
&\dd u(X)=X(u)=\langle\nabla u,X\rangle_g,\\
&\Hess u(X,Y)=(\nabla_X\dd u)(Y)
             =\langle\nabla_X\nabla u,Y\rangle_g,\\
&\Delta u=\tr_g\Hess u.
\end{aligned}
\]

Our curvature convention is
\[
\begin{aligned}
&\Rm(X,Y)Z
=\nabla_X\nabla_YZ-\nabla_Y\nabla_XZ-\nabla_{[X,Y]}Z,\\
&\mathcal R(X,Y,Z,T)
=\langle\Rm(X,Y)Z,T\rangle_g.
\end{aligned}
\]
For linearly independent $X,Y\in T_xM$, write
$[X\wedge Y]=\operatorname{span}\{X,Y\}$. Then
\[
\Sec([X\wedge Y])
=\frac{\mathcal R(X,Y,Y,X)}
{|X|_g^2|Y|_g^2-\langle X,Y\rangle_g^2}.
\]

In a local orthonormal frame $\{e_i\}_{i=1}^n$,
\[
\Ric(X,Y)=\sum_{i=1}^n\mathcal R(X,e_i,e_i,Y).
\]
For $u\in C^3$, we use the component notation
\begin{equation*}
\begin{aligned}
&u_i=\dd u(e_i),\\
&u_{i;j}=(\nabla_{e_j}\dd u)(e_i)=\Hess u(e_i,e_j),\\
&u_{i;jk}=(\nabla_{e_k}\Hess u)(e_i,e_j).
\end{aligned}
\end{equation*}
Repeated component indices are summed. In particular,
\[
\nabla u=u_i e_i,\quad
u_{i;j}=u_{j;i},\quad
\Delta u=u_{i;i}.
\]

For pointwise calculations at $q$, we choose the frame so that
\[\left\langle e_{i},e_{j}\right\rangle_{g}(q)=\delta_{ij},\quad
\nabla_{e_i}e_j(q)=0.
\]
Then
\[
u_{i;j}(q)=e_j(u_i)(q),\quad
u_{i;jk}(q)=e_k(u_{i;j})(q).
\]

\subsection{Horizontal and vertical derivatives on \texorpdfstring{$TM$}{TM}}\label{s22}

We verify the derivatives in Definition~\ref{def-f-deriv}
and then establish their chain rule.

The horizontal derivative of $f$ varies the base point while
parallel transporting the tangent vector; the vertical
derivative of $f$ varies the vector within a fixed tangent space. These constructions correspond to the standard horizontal
and vertical lifts; see
\cite{Fourtzisetal2023AGAG,KolarMichorSlovak1993-book,Sasaki1958}.
We verify the formulas in local coordinates.
Let $f\in C^1(TM\setminus0_M)$ and fix
$(x,\xi)\in TM\setminus0_M$.
For $X\in T_xM$, choose a smooth curve $\gamma$ satisfying
$\gamma(0)=x$ and $\dot\gamma(0)=X$.
Parallel transport of $\xi$ along $\gamma$ defines a smooth curve in $TM$:
\[
 \widetilde\gamma(t)
 :=
 \left(\gamma(t),V(t)\right),
 \quad V(t):=P_{0,t}\xi.
\]
The horizontal lift of $X$ at $(x,\xi)$ is defined by \[X^h=\dot{\widetilde\gamma}(0).\] 
Choose local coordinates $(x^i)$ near $x$, and write
\[
 X=X^i\partial_i|_x,
 \quad
 V(t)=V^k(t)\partial_k|_{\gamma(t)}.
\]
Let $\Gamma^k_{ij}$ denote the Christoffel symbols of
$\nabla$ in these coordinates. 
In the induced coordinates $(x^i,\xi^k)$ on $TM$, the curve
$\widetilde\gamma$ is represented by
$(\gamma^i(t),V^k(t))$, where $\gamma^i=x^i\circ\gamma$.
Since $V$ is parallel along $\gamma$, its components satisfy
\[
 \dot V^k(t)
 +\Gamma^k_{ij}(\gamma(t))\dot\gamma^i(t)V^j(t)=0,
 \quad V^k(0)=\xi^k.
\]

Thus 
\begin{equation*}
 \begin{aligned}
X^{h}&=\dot{\widetilde\gamma}(0)\\
 &=
 X^i\left.\frac{\partial}{\partial x^i}\right|_{(x,\xi)}
 +
 \dot V^k(0)
 \left.\frac{\partial}{\partial\xi^k}\right|_{(x,\xi)}\\
 &=
 X^i\left(
 \left.\frac{\partial}{\partial x^i}\right|_{(x,\xi)}
 -
 \Gamma^k_{ij}(x)\xi^j
 \left.\frac{\partial}{\partial\xi^k}\right|_{(x,\xi)}
 \right).
 \end{aligned}
\end{equation*}
Therefore, $X^{h}$ is determined by $(x,\xi,X)$,
independently of the choice of $\gamma$, and depends linearly
on $X$.
Moreover, preservation of the norm gives
\[
 |V(t)|_{g_{\gamma(t)}}=|\xi|_{g_x}>0,
\]
so $\widetilde\gamma$ lies in the domain $TM\setminus0_M$ of $f$. Then \begin{equation}\label{eq-hor-deriv}
 \begin{aligned}
 \left.df \right|_{(x,\xi)}(X^{h})=\left.\frac{\dd}{\dd t}\right|_{t=0}
 f\circ \widetilde{\gamma}=\left.\frac{\dd}{\dd t}\right|_{t=0}
 f(\gamma(t),P_{0,t}\xi)=
 X^i\left(
 \frac{\partial f}{\partial x^i}
 -
 \Gamma^k_{ij}\xi^j\frac{\partial f}{\partial\xi^k}
 \right).
 \end{aligned}
\end{equation}

The map $X\mapsto \left.df \right|_{(x,\xi)}(X^{h})$ is therefore a
well-defined linear functional on $T_xM$. By the Riesz representation theorem, there is a unique vector
$\nabla^hf(x,\xi)\in T_xM$ satisfying
\[
 \langle\nabla^hf(x,\xi),X\rangle_g
 =\left.df \right|_{(x,\xi)}(X^{h}),\quad X\in T_xM.
\]
This proves the well-definedness of the horizontal derivative
in Definition~\ref{def-f-deriv}.

For the vertical derivative, we keep the base point $x$ fixed
and vary $\xi$ within the fiber $T_xM$. We will use the canonical vertical lift associated with the
vector bundle structure of $TM$; see \cite{GraciaPons2001,KolarMichorSlovak1993-book}. 
Given $Z\in T_xM$, consider the curve
\[
 \beta(t):=(x,\xi+tZ).
\]
The vertical lift of $Z$ at $(x,\xi)$ is defined by 
\[Z^{v}=\dot{\beta}(0).\]
In the induced bundle coordinates $(x^{i},\xi^{k})$, writing $Z=Z^k\partial_k|_x$, 
\[
 \beta(t)=\bigl(x^i,\xi^k+tZ^k\bigr).
\]
 Hence
\begin{equation*}
 Z^v:=
 \dot\beta(0)
 =
 Z^k\left.\frac{\partial}{\partial\xi^k}\right|_{(x,\xi)}
 \in T_{(x,\xi)}TM.
\end{equation*}
Therefore, the map
\[
 T_xM\longrightarrow\operatorname{span}\left\{
 \left.\frac{\partial}{\partial\xi^1}\right|_{(x,\xi)},
 \ldots,
 \left.\frac{\partial}{\partial\xi^n}\right|_{(x,\xi)}
 \right\},
 \quad Z\longmapsto Z^{v},
\]
is a linear isomorphism. For sufficiently small $|t|$, the curve $\beta$ lies in $TM\setminus0_M$. Hence
\begin{equation}\label{eq-ver-deriv}
 \begin{aligned}
 df|_{(x,\xi)}(Z^{v})=\left.\frac{\dd}{\dd t}\right|_{t=0}f\circ \beta=\left.\frac{\dd}{\dd t}\right|_{t=0}f(x,\xi+tZ)=\dd f|_{(x,\xi)}\bigl(\dot\beta(0)\bigr)=Z^k\frac{\partial f}{\partial\xi^k}.
 \end{aligned}
\end{equation}
The map $Z\mapsto\dd f|_{(x,\xi)}(Z^v)$ is therefore
a linear functional on $T_xM$. By Riesz representation again, there is a unique vector
$\nabla^vf(x,\xi)\in T_xM$ satisfying
\[
 \langle\nabla^vf(x,\xi),Z\rangle_g
 =
 \dd f|_{(x,\xi)}(Z^{v}),
 \quad Z\in T_xM.
\]
 This proves the well-definedness of the vertical derivative
 in Definition~\ref{def-f-deriv}.

 \begin{lemma}\label{lem:chain}\rm
Let $f\in C^1(TM\setminus0_M)$ and $Y$ be a $C^1$
vector field on an open set $U\subset M$.
Then $\widetilde f(y)=f(y,Y(y))$ is $C^1$ on
$\{y\in U:Y(y)\ne0\}$, and, for every 
$x\in U$ with
$Y(x)\ne0$ and every $X\in T_xM$,
\[
X\widetilde f
=
\langle\nabla^h f(x,Y(x)),X\rangle_g
+
\langle\nabla^v f(x,Y(x)),\nabla_XY\rangle_g.
\]
In particular, if $u\in C^2(U)$, then for every $x\in U$
with $\nabla u(x)\ne0$ and every $X\in T_xM$,
\[
X[f(\cdot,\nabla u)]
=
\langle\nabla^h f(x,\nabla u(x)),X\rangle_g
+
\langle\nabla^v f(x,\nabla u(x)),
        \nabla_X\nabla u\rangle_g.
\]
\end{lemma}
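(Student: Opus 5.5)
The plan is to prove the chain rule in Lemma~\ref{lem:chain} directly in the induced bundle coordinates $(x^i,\xi^k)$, using the coordinate formulas \eqref{eq-hor-deriv} and \eqref{eq-ver-deriv} already established.

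\emph{Regularity.} Set $U'=\{y\in U:Y(y)\ne0\}$, which is open since $Y$ is continuous. The map $y\mapsto(y,Y(y))$ sends $U'$ into $TM\setminus0_M$. In a chart it reads $(y^i)\mapsto(y^i,Y^k(y))$ with $Y^k\in C^1$, so it is a $C^1$ map. Hence $\widetilde f=f\circ(\mathrm{id},Y)$ is $C^1$ on $U'$, being a composition of $C^1$ maps.

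\emph{The identity.} Fix $x\in U'$ and $X=X^i\partial_i|_x$. The ordinary chain rule in coordinates gives
\[
 X\widetilde f
 =X^i\frac{\partial f}{\partial x^i}(x,Y(x))
 +X^i\,\partial_iY^k(x)\,\frac{\partial f}{\partial\xi^k}(x,Y(x)).
\]
Next rewrite $\partial_iY^k$ through the covariant derivative:
\[
 X^i\partial_iY^k=(\nabla_XY)^k-\Gamma^k_{ij}X^iY^j.
\]
Substituting, the right-hand side splits into two pieces:
\[
 X^i\Bigl(\frac{\partial f}{\partial x^i}
 -\Gamma^k_{ij}Y^j\frac{\partial f}{\partial\xi^k}\Bigr)
 +(\nabla_XY)^k\frac{\partial f}{\partial\xi^k}.
\]
By \eqref{eq-hor-deriv} with $\xi=Y(x)$, the first piece is $\langle\nabla^hf(x,Y(x)),X\rangle_g$. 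By \eqref{eq-ver-deriv} with $Z=\nabla_XY$, the second piece is $\langle\nabla^vf(x,Y(x)),\nabla_XY\rangle_g$. This proves the stated formula.

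\emph{The special case.} For $u\in C^2(U)$, the field $Y=\nabla u$ is $C^1$ and $\{Y\ne0\}=\{\nabla u\ne0\}$. The second display then follows immediately from the first.

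\emph{Expected difficulty.} The only point needing care is the sign and index placement of the Christoffel term. It must match exactly the term appearing in the horizontal lift computed before the lemma. That lift came from the parallel transport equation $\dot V^k=-\Gamma^k_{ij}\dot\gamma^iV^j$, and the expression for $\nabla_XY$ carries the same $\Gamma^k_{ij}X^iY^j$ term with opposite placement. Once this matches, the two pieces recombine as above.

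As a coordinate-free cross-check, take a curve $\gamma$ with $\dot\gamma(0)=X$ and compare $Y(\gamma(t))$ with $P_{0,t}Y(x)$. Their difference, pulled back by $P_{0,t}^{-1}$, has derivative $\nabla_XY$ at $t=0$. This yields the same splitting of the derivative into a horizontal part and a vertical part.
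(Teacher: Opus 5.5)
Your proposal is correct and follows essentially the same route as the paper. Both arguments apply the coordinate chain rule in the induced bundle coordinates, then add and subtract the Christoffel term $\Gamma^k_{ij}X^iY^j\,\partial f/\partial\xi^k$ so that the two pieces are identified through \eqref{eq-hor-deriv} and \eqref{eq-ver-deriv}; your regularity argument via the $C^1$ map $y\mapsto(y,Y(y))$ on the open set $\{Y\ne0\}$ also matches the paper's.
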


\begin{proof}

Since $Y(x)\ne0$, the vector field $Y$ does not vanish
on a neighborhood of $x$. The composition $\widetilde f$
is therefore $C^1$ on this neighborhood.


In local coordinates $(x^{i},\xi^{k})$, write
$Y=Y^j\frac{\partial}{\partial x^j}$, $X=X^i\left.\frac{\partial}{\partial x^i}\right|_x$, and 
\[(y,Y(y))=(y^{1},\cdots,y^{n},Y^{1}(y),\cdots,Y^{n}(y)).\]
Recall that
\[
\nabla_XY
=
X^i\left(\frac{\partial Y^{k}}{\partial x^{i}}+\Gamma^k_{ij}Y^j\right)
\frac{\partial }{\partial x^{k}}.
\]
Now, we choose a smooth curve $\gamma:(-\varepsilon,\varepsilon)\rightarrow U$ such that $\gamma(0)=x$, $\dot{\gamma}(0)=X$ and use \eqref{eq-hor-deriv} and \eqref{eq-ver-deriv} to obtain
\[
\begin{aligned}
X\widetilde f
&=\left.\frac{d}{dt}\right|_{t=0}\widetilde f (\gamma(t))=\left.\frac{d}{dt}\right|_{t=0}f (\gamma(t),Y(\gamma(t)))\\
&=\left.\left[\frac{\partial f}{\partial x^{i}}\frac{\partial x^{i}(\gamma(t))}{\partial t}+\frac{\partial f}{\partial \xi^{k}}\frac{\partial Y^{k}}{\partial x^{j}}\frac{\partial x^{j}(\gamma(t))}{\partial t}\right]\right|_{t=0}\\
&=X^{i}\left(\frac{\partial f}{\partial x^{i}}-\Gamma^{k}_{ij}Y^{j}(x)\frac{\partial f}{\partial \xi^{k}}\right)+\left(X^{j}\frac{\partial Y^{k}(x)}{\partial x^{j}}+\Gamma^{k}_{ij}X^{i}Y^{j}(x)\right)\frac{\partial f}{\partial \xi^{k}}\\
&=df(X^{h})+df((\nabla_{X}Y)^{v})\\
&=
\langle\nabla^h f(x,Y(x)),X\rangle_g
+
\langle\nabla^v f(x,Y(x)),\nabla_XY\rangle_g.
\end{aligned}
\]
Taking $Y=\nabla u$ proves the last assertion.

\end{proof}
 
\subsection{The principal second-order operator}\label{s23}
Let $u\in C^1(\Omega)$, with $W$ and $\cL^{ij}$ as in
\eqref{def-W} and \eqref{p1-nondiv}.
For $v\in C^2(\Omega)$, define
\begin{equation}\label{def-L}
 \cL v
 :=\Delta v-\frac{\Hess v(\nabla u,\nabla u)}{W^2}
 =\cL^{ij}v_{i;j}.
\end{equation}
The last equality holds in any local orthonormal frame,
with the notation of Section~\ref{s21}.

\begin{lemma}
\label{lem:ellipticity}
In any orthonormal frame, for every $X\in T_xM$,
with $X_i=\langle X,e_i\rangle_g$,
\[
 \frac{|X|_g^2}{W^2}
 \le\cL^{ij}X_iX_j
 \le|X|_g^2,
 \quad
 \sum_{i=1}^n\cL^{ii}=n-1+W^{-2}\le n.
\]
If $v\in C^2$ near $q\in\Omega$ has a local maximum
at $q$, then $\cL v(q)\le0$.
\end{lemma}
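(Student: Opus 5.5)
The ellipticity bounds are pointwise linear algebra, so I will compute $\cL^{ij}X_iX_j$ directly.

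In an orthonormal frame at $x$, with $\nabla u=u_ie_i$,
\[
 \cL^{ij}X_iX_j=|X|_g^2-\frac{\langle\nabla u,X\rangle_g^2}{W^2}.
\]
The upper bound $\cL^{ij}X_iX_j\le|X|_g^2$ is immediate. For the lower bound, Cauchy--Schwarz gives $\langle\nabla u,X\rangle_g^2\le|\nabla u|_g^2|X|_g^2$, so
\[
 \cL^{ij}X_iX_j\ge|X|_g^2\Bigl(1-\frac{|\nabla u|_g^2}{W^2}\Bigr)=\frac{|X|_g^2}{W^2},
\]
since $W^2-|\nabla u|_g^2=1$. Equivalently, the symmetric matrix $(\cL^{ij})$ has eigenvalue $1$ on $\nabla u^\perp$ (multiplicity $n-1$) and eigenvalue $W^{-2}$ in the direction of $\nabla u$ when $\nabla u\ne0$. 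For the trace, $\sum_i\cL^{ii}=n-|\nabla u|_g^2/W^2=n-1+W^{-2}$, and this is at most $n$ because $W\ge1$. None of these quantities depends on the choice of orthonormal frame, since each is the contraction of a tensor with the metric.

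For the maximum principle statement, let $v$ have a local maximum at $q$. Then $\dd v(q)=0$ and $\Hess v(q)\le0$ as a quadratic form; the latter follows by restricting $v$ to geodesics $t\mapsto\exp_q(tX)$, because at a critical point the second derivative along a geodesic equals $\Hess v(X,X)$. Diagonalize $\Hess v(q)$ in an orthonormal frame $e_i$, with eigenvalues $\lambda_i\le0$. Then
\[
 \cL v(q)=\sum_i\cL^{ii}\lambda_i.
\]
The diagonal entries satisfy $\cL^{ii}=\cL^{ij}(e_i)_i(e_i)_j\ge W^{-2}>0$ by the first part, so each term is nonpositive and $\cL v(q)\le0$. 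Alternatively, $\cL v(q)=\tr(AB)$ with $A=(\cL^{ij})\ge0$ and $B=\Hess v(q)\le0$, and the trace of such a product is nonpositive since $\tr(AB)=\tr(A^{1/2}BA^{1/2})$.

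No step here is a genuine obstacle. The only point needing care is justifying $\Hess v(q)\le0$ on a manifold, which the geodesic restriction handles, together with the frame independence of $\cL v$ noted in \eqref{def-L}.
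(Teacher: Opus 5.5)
Your proof is correct and is in substance the paper's argument. The paper works in a frame with $e_1=\nabla u/|\nabla u|_g$, where $(\cL^{ij})=\operatorname{diag}(W^{-2},1,\ldots,1)$, and gets $\cL v(q)=W^{-2}v_{1;1}+\sum_{i\ge2}v_{i;i}\le0$ from the nonpositive diagonal of $\Hess v(q)$; you instead use Cauchy--Schwarz for the bounds and diagonalize $\Hess v(q)$, relying on the positive diagonal of $(\cL^{ij})$, which is the same trace-sign argument with the two matrices' roles swapped (and your geodesic justification of $\Hess v(q)\le0$ fills in a step the paper leaves implicit).
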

\begin{proof}
The coefficients differ from those of the Laplacian
only in the direction of $\nabla u$.
At a point where $\nabla u\ne0$, choose an orthonormal
basis with $e_1=\frac{\nabla u}{|\nabla u|_g}$. Then
\[
 (\cL^{ij})
 =\operatorname{diag}(W^{-2},1,\ldots,1).
\]
Where $\nabla u=0$, we have $W=1$ and
$(\cL^{ij})=I$.
The asserted bounds and trace formula follow
and are unchanged by orthogonal changes of frame.

At a local maximum $q$, $\Hess v(q)$ is negative
semidefinite. In the above basis,
\[
 \cL v(q)
 =W(q)^{-2}v_{1;1}(q)
  +\sum_{i=2}^n v_{i;i}(q)
 \le0.
\]
\end{proof}

 Thus $\cL$ is elliptic, and it is uniformly elliptic
 on every subdomain where $|\nabla u|_g$ is bounded.
Finally, contracting the Hessian product and chain
rules with $\cL^{ij}$ gives
\begin{equation}\label{eq-L}
 \begin{aligned}
 &\cL(vw)
 =v\cL w+w\cL v+2\cL^{ij}v_iw_j,\\
 &\cL(\psi(v))
 =\psi'(v)\cL v+\psi''(v)\cL^{ij}v_iv_j,
 \end{aligned}
\end{equation}
for $v,w\in C^2$ and $\psi\in C^2$ on an interval
containing the range of $v$.

\subsection{A weighted Bochner inequality}\label{s24}
We combine the classical Bochner identity
\cite[Chapter~9]{Petersen2016}
with the expression for $\cL$ in \eqref{def-L}.
For related computations, see
\cite{Xu2026,XuZhang2017}.

Set
\[
 z=|\nabla u|_g^2,\quad
 \Omega_*=\{x\in\Omega:z(x)>0\}.
\]

\begin{lemma}
      \label{lem:bochner}\rm
	  Let $u\in C^1(\Omega)\cap C^3(\Omega_*)$ satisfy
\eqref{p1-nondiv} on $\Omega_*$, 
and write
$G=Wf(x,\nabla u)$.
Then, on $\Omega_*$,
\[
 \begin{aligned}
 \cL z
 =2|\Hess u|_g^2
     +2\langle\nabla u,\nabla G\rangle_g
     +\frac{|\nabla z|_g^2}{2W^2}-\frac{\langle\nabla z,\nabla u\rangle_g^2}{W^4}
     +2\Ric(\nabla u,\nabla u),
 \end{aligned}
\]
and
\[
 |\Hess u|_g^2
 \ge \frac{G^2}{n}
     +\frac{2+z}{4W^4}|\nabla z|_g^2.
\]
Under the Ricci bound in \eqref{eq-geometry},
on $B_R(p)\cap\Omega_*$ we consequently have
\begin{equation}\label{eq:bochner-ineq}
 \begin{aligned}
 \cL z\ge
 2\langle\nabla u,\nabla G\rangle_g+\frac2nG^2
 +\frac{3+2z}{2W^4}|\nabla z|_g^2-\frac{\langle\nabla z,\nabla u\rangle_g^2}{W^4}
 -2(n-1)\kappa^2z.
 \end{aligned}
\end{equation}

\end{lemma}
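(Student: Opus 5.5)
The plan is to prove the identity by combining the classical Bochner formula with the once-differentiated equation \eqref{p1-nondiv}. The Hessian lower bound comes from a pointwise computation in an adapted frame, and \eqref{eq:bochner-ineq} follows by substituting one into the other. All computations are at a point $q\in\Omega_*$, in a frame as in Section~\ref{s21}. There $z_i=2u_ku_{k;i}$, so $u_{i;k}u_k=\tfrac12 z_i$ and $\langle\nabla z,\nabla u\rangle_g=2u_iu_ju_{i;j}$.

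\emph{Step 1 (identity).} Bochner's formula gives
\[
\Delta z=2|\Hess u|_g^2+2\langle\nabla u,\nabla\Delta u\rangle_g+2\Ric(\nabla u,\nabla u).
\]
Since $u_iu_ju_{i;j}=\tfrac12\langle\nabla z,\nabla u\rangle_g$ and $W^2=1+z$, equation \eqref{p1-nondiv} reads
\[
\Delta u=G+\frac{\langle\nabla z,\nabla u\rangle_g}{2W^2}.
\]
I differentiate this in the direction $\nabla u$. Using $(W^2)_k=z_k$, $u_{i;k}u_k=\tfrac12 z_i$, and $u_k z_{i;k}u_i=\Hess z(\nabla u,\nabla u)$, I expect
\[
\langle\nabla u,\nabla\Delta u\rangle_g=\langle\nabla u,\nabla G\rangle_g+\frac{\Hess z(\nabla u,\nabla u)}{2W^2}+\frac{|\nabla z|_g^2}{4W^2}-\frac{\langle\nabla z,\nabla u\rangle_g^2}{2W^4}.
\]
Substituting into $\cL z=\Delta z-W^{-2}\Hess z(\nabla u,\nabla u)$, the $\Hess z$ terms cancel exactly, leaving the stated identity. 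The only care needed is to differentiate $z$ directly, so that no commutation of third derivatives is required.

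\emph{Step 2 (Hessian bound).} I choose $e_1=\nabla u/|\nabla u|_g$. Then $z_i=2|\nabla u|_g u_{1;i}$, hence
\[
|\nabla z|_g^2=4z\sum_i u_{1;i}^2 .
\]
By Lemma~\ref{lem:ellipticity} the equation becomes $au_{1;1}+\sum_{i\ge2}u_{i;i}=G$ with $a=W^{-2}$. Using symmetry of $\Hess u$ and Cauchy--Schwarz on the diagonal,
\[
|\Hess u|_g^2\ge u_{1;1}^2+2\sum_{i\ge2}u_{1;i}^2+\frac{(G-au_{1;1})^2}{n-1}.
\]
Next, $\frac{2+z}{4W^4}\cdot 4z=\frac{W^4-1}{W^4}=1-a^2$, so the target term equals $(1-a^2)\sum_i u_{1;i}^2$. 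The off-diagonal terms are then handled by $2\ge 1-a^2$. It remains to show
\[
a^2u_{1;1}^2+\frac{(G-au_{1;1})^2}{n-1}\ge\frac{G^2}{n},
\]
which is Cauchy--Schwarz applied to $G=au_{1;1}+(G-au_{1;1})$ with weights $1$ and $n-1$.

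\emph{Step 3 (inequality).} I insert Step 2 into Step 1 and use $\Ric(\nabla u,\nabla u)\ge-(n-1)\kappa^2z$. The $|\nabla z|_g^2$ coefficients combine as
\[
\frac{1}{2W^2}+\frac{2+z}{2W^4}=\frac{W^2+2+z}{2W^4}=\frac{3+2z}{2W^4},
\]
which yields \eqref{eq:bochner-ineq}. The main obstacle is Step 2: the bound must retain the full $G^2/n$ while also capturing the $(2+z)/(4W^4)$ gradient term. This works precisely because the weight $W^{-4}$ lost on $u_{1;1}^2$ is exactly what Cauchy--Schwarz needs for the trace identity.
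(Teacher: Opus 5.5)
Your proposal is correct and follows the paper's proof essentially step for step. The shared steps are: differentiating $\Delta u=G+\langle\nabla z,\nabla u\rangle_g/(2W^2)$ along $\nabla u$ so that the $\Hess z(\nabla u,\nabla u)$ terms cancel against $\cL z$; working in the adapted frame $e_1=\nabla u/|\nabla u|_g$ with $|\nabla z|_g^2=4z\sum_i u_{1;i}^2$ and $1-W^{-4}=z(2+z)/W^4$; and combining the coefficients in the same way. The only cosmetic difference is that you split the Cauchy--Schwarz step into an $(n-1)$-term inequality followed by a weighted two-term one, whereas the paper applies the $n$-term Cauchy inequality directly to $W^{-2}u_{1;1}+\sum_{a\ge2}u_{a;a}$.
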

\begin{proof}
The classical Bochner identity, with the curvature
convention of Section~\ref{s21}, reads
\[
 \frac12\Delta z
 =|\Hess u|_g^2
  +\langle\nabla u,\nabla\Delta u\rangle_g
  +\Ric(\nabla u,\nabla u).
\]
We express the middle term using $\cL u=G$.

Fix $x\in\Omega_*$ and choose a normal orthonormal
frame at $x$. Metric compatibility and symmetry of
the Hessian give
\begin{equation}\label{eq:grad-z}
 z_i=2u_{i;j}u_j,\quad
 \nabla z=2\nabla_{\nabla u}\nabla u.
\end{equation}
Hence the equation becomes
\[
 \Delta u=G+\frac{u_i z_i}{2W^2}.
\]
All component calculations below are evaluated at $x$.
Since $e_k(W^{-2})=-\frac{z_k}{W^4}$, using \eqref{eq:grad-z},
\[
 \begin{aligned}
 2\langle\nabla u,\nabla\Delta u\rangle_g&=2u_k(\Delta u)_k\\
 &=2u_kG_k
 +\frac{u_k u_{i;k}z_i+u_k u_i z_{i;k}}{W^2}
 -\frac{(u_kz_k)(u_i z_i)}{W^4}\\
 &=2\langle\nabla u,\nabla G\rangle_g
 +\frac{|\nabla z|_g^2}{2W^2}
 +\frac{\Hess z(\nabla u,\nabla u)}{W^2}-\frac{\langle\nabla z,\nabla u\rangle_g^2}{W^4}.
 \end{aligned}
\]
Substituting into the Bochner identity and using
\[
 \cL z
 =\Delta z-\frac{\Hess z(\nabla u,\nabla u)}{W^2}
\]
proves the asserted identity.

For the Hessian bound, choose an orthonormal basis
at $x$ with $e_1=\frac{\nabla u}{\sqrt z}$.
Then
\[
 G=\cL u=\frac{u_{1;1}}{W^2}+\sum_{a=2}^n u_{a;a},
 \quad
 |\nabla z|_g^2=4z\sum_{i=1}^n u_{1;i}^2.
\]
Cauchy's inequality gives
\[
 \frac{G^2}{n}
 \le \frac{u_{1;1}^2}{W^4}
     +\sum_{a=2}^n u_{a;a}^2.
\]
Since $1-W^{-4}=\frac{z(2+z)}{W^4}<1$, it follows that
\[
 \begin{aligned}
 |\Hess u|_g^2-\frac{G^2}{n}
 &\ge
 \sum_{i,j=1}^n u_{i;j}^2
 -\frac{u_{1;1}^2}{W^4}
 -\sum_{a=2}^n u_{a;a}^2\\
 &\ge
 (1-W^{-4})u_{1;1}^2
 +2\sum_{a=2}^n u_{1;a}^2\\
 &\ge
 \frac{z(2+z)}{W^4}
 \sum_{i=1}^n u_{1;i}^2
 =\frac{2+z}{4W^4}|\nabla z|_g^2.
 \end{aligned}
\]
This proves the Hessian bound.

Finally, insert this bound into the identity above
and use
\[
 \Ric(\nabla u,\nabla u)\ge-(n-1)\kappa^2z,
 \quad
 \frac{2+z}{2W^4}+\frac1{2W^2}
 =\frac{3+2z}{2W^4}.
\]
This yields \eqref{eq:bochner-ineq}.
\end{proof}

\subsection{The distance cutoff}\label{s25}
Fix $0<R_0<R$ and $\alpha\ge2$.
Recall that $K$ is defined in \eqref{eq-scales}
of Theorem~\ref{thm-main}.
With $r=d_g(p,\cdot)$, set
\[
 \eta=1-\frac{r^2}{R_0^2},\quad
 \phi=\eta^\alpha
 \quad\text{on }B_{R_0}(p).
\]
Extend $\phi$ continuously to $\overline B_{R_0}(p)$
by setting $\phi=0$ where $r=R_0$.

\begin{lemma}\label{lem:cutoff}\rm
Assume the Hessian comparison in
Definition~\ref{def-compare}, and let $\cL$ be defined by
\eqref{def-L}. The following estimates hold, with constants
depending only on the indicated subscripts.

\begin{enumerate}
\item[(i)]
If
$q\in B_{R_0}(p)\setminus(\{p\}\cup\Cut(p))$,
then $\phi$ is smooth near $q$, and at $q$,
\begin{equation}\label{eq:cutoff-L}
 \frac{\cL\phi}{\phi}
 -\frac{\cL^{ij}\phi_i\phi_j}{\phi^2}
 \ge-C_{n,\alpha}K(R_0,\mu)\phi^{-2/\alpha},
\end{equation}
\begin{equation}\label{eq:cutoff-grad}
 \frac{|\nabla\phi|_g^2}{\phi^2}
 \le C_\alpha K(R_0,\mu)\phi^{-2/\alpha}.
\end{equation}

\item[(ii)]
The function $\phi$ is smooth near $p$, and
\[
 \phi(p)=1,\quad
 \nabla\phi(p)=0,\quad
 \Hess\phi|_p=-\frac{2\alpha}{R_0^2}g_p.
\]
In particular, \eqref{eq:cutoff-L} and
\eqref{eq:cutoff-grad} also hold at $p$.

\item[(iii)]
If $q\in\Cut(p)\cap B_{R_0}(p)$, then, for every
sufficiently small $\varepsilon>0$, the upper support
$r_{q,\varepsilon}$ in Definition~\ref{def-compare}
defines
\[
 \eta_{q,\varepsilon}
 =1-\frac{r_{q,\varepsilon}^2}{R_0^2},\quad
 \phi_{q,\varepsilon}=\eta_{q,\varepsilon}^{\alpha}
\]
on a sufficiently small neighborhood of $q$.
The function $\phi_{q,\varepsilon}$ is smooth and satisfies
\[
 0<\phi_{q,\varepsilon}\le\phi,\quad
 \phi_{q,\varepsilon}(q)=\phi(q).
\]
At $q$,
\begin{equation}\label{eq:supported-cutoff-L}
 \begin{aligned}
 &\frac{\cL\phi_{q,\varepsilon}}{\phi_{q,\varepsilon}}
 -\frac{\cL^{ij}(\phi_{q,\varepsilon})_i
                   (\phi_{q,\varepsilon})_j}
        {\phi_{q,\varepsilon}^2}\ge
 -C_{n,\alpha}
 \left[K(R_0,\mu)
       +\frac{\varepsilon(1+R_0)}{R_0^2}\right]
 \phi(q)^{-2/\alpha},
 \end{aligned}
\end{equation}
and
\begin{equation}\label{eq:supported-cutoff-gradient}
 \frac{|\nabla\phi_{q,\varepsilon}(q)|_g^2}
      {\phi_{q,\varepsilon}(q)^2}
 \le C_\alpha K(R_0,\mu)\phi(q)^{-2/\alpha}.
\end{equation}
If $\varepsilon(1+R_0)\le1+\mu R_0$, then
\eqref{eq:cutoff-L} and \eqref{eq:cutoff-grad}
hold at $q$ with $\phi$ replaced by
$\phi_{q,\varepsilon}$, after increasing $C_{n,\alpha}$.
\end{enumerate}
\end{lemma}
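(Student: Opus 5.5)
The plan is a direct computation of $\cL\phi$ and $\nabla\phi$ through the chain rule \eqref{eq-L}, first for $\eta$ and then for $\phi=\eta^\alpha$, with the distance Hessian bound \eqref{eq-comp2} applied wherever $\Hess r$ appears.

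\textbf{Part (i).} Away from $\{p\}\cup\Cut(p)$ we have $\nabla\eta=-\frac{2r}{R_0^2}\nabla r$ and
\[
\cL\eta=-\frac{2}{R_0^2}\bigl(r\,\cL r+\cL^{ij}r_ir_j\bigr).
\]
Lemma~\ref{lem:ellipticity} gives $\cL^{ij}r_ir_j\le|\nabla r|^2=1$. Contracting \eqref{eq-comp2} with the nonnegative matrix $\cL^{ij}$ gives
\[
\cL r\le c_\mu(r)\bigl(\tr\cL-\cL^{ij}r_ir_j\bigr)\le n\,c_\mu(r).
\]
The key elementary bound is $r\,c_\mu(r)\le 1+\mu r$, which follows from $s\coth s\le 1+s$. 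Hence $-\cL\eta\le \frac{2(n+1)(1+\mu R_0)}{R_0^2}$, and also $|\nabla\eta|^2\le 4r^2/R_0^4\le 4/R_0^2$.

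Next, by \eqref{eq-L},
\[
\frac{\cL\phi}{\phi}-\frac{\cL^{ij}\phi_i\phi_j}{\phi^2}
=\frac{\alpha\cL\eta}{\eta}-\alpha\frac{\cL^{ij}\eta_i\eta_j}{\eta^2}.
\]
Both terms are bounded below by $-C_{n,\alpha}K\eta^{-2}$, using $\eta^{-1}\le\eta^{-2}$ since $\eta\le1$. Since $\eta^{-2}=\phi^{-2/\alpha}$, this is \eqref{eq:cutoff-L}. Similarly $|\nabla\phi|^2/\phi^2=\alpha^2|\nabla\eta|^2/\eta^2$, which gives \eqref{eq:cutoff-grad}; here $1/R_0^2\le K(R_0,\mu)$.

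\textbf{Part (ii).} Near $p$ the function $r^2$ is smooth with $\Hess r^2|_p=2g$, which gives the stated values at $p$. Then $\cL\phi(p)=-\frac{2\alpha}{R_0^2}\tr\cL\ge-\frac{2n\alpha}{R_0^2}$, and $\nabla\phi(p)=0$, so both estimates hold at $p$.

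\textbf{Part (iii).} First we need $|\nabla r_{q,\varepsilon}(q)|\le1$. Since $r_{q,\varepsilon}-r$ has a minimum at $q$ and $r$ is $1$-Lipschitz, for a unit vector $X$ and a curve $\gamma$ in direction $X$ we get $r_{q,\varepsilon}(\gamma(t))-r_{q,\varepsilon}(q)\ge r(\gamma(t))-r(q)\ge -t$. Letting $t\downarrow0$ gives $\langle\nabla r_{q,\varepsilon},X\rangle\ge-1$ for all unit $X$, so $|\nabla r_{q,\varepsilon}|\le1$, and therefore $g-\dd r_{q,\varepsilon}\otimes \dd r_{q,\varepsilon}\ge0$.

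Choosing the neighborhood small keeps $r_{q,\varepsilon}<R_0$. Then $\eta_{q,\varepsilon}>0$, and $\eta_{q,\varepsilon}\le\eta$ with equality at $q$ gives $0<\phi_{q,\varepsilon}\le\phi$ and $\phi_{q,\varepsilon}(q)=\phi(q)$.

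Now repeat the computation of part (i) at $q$ using \eqref{eq-comp3}. The extra terms are
\[
r\,\varepsilon\,\tr\cL+r\,\varepsilon\,\tr\cL\le 2n\varepsilon R_0,
\]
and the gradient term is still at most $1$. This yields the additional contribution $C\varepsilon(1+R_0)/R_0^2$ in \eqref{eq:supported-cutoff-L}. The gradient estimate \eqref{eq:supported-cutoff-gradient} is unchanged, since $|\nabla r_{q,\varepsilon}|\le1$. Finally, if $\varepsilon(1+R_0)\le1+\mu R_0$, the extra term is absorbed into $K$.

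The main point requiring care is the upper-support gradient bound. The rest is bookkeeping, together with the scalar inequality $r c_\mu(r)\le1+\mu r$.
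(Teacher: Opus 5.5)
Your proposal is correct and follows the paper's proof essentially step by step. The shared steps are: contracting the Hessian comparison against the nonnegative matrix $(\cL^{ij})$; using $r\,c_\mu(r)\le 1+\mu r$; the identity $\cL\log\phi=\alpha\,\cL\log\eta$; the direct evaluation at $p$; and the one-sided-derivative argument for $|\nabla r_{q,\varepsilon}(q)|_g\le1$, followed by tracking the extra $2n\varepsilon\, r(q)$ error coming from \eqref{eq-comp3}. The only cosmetic difference is that you obtain $\phi_{q,\varepsilon}\le\phi$ directly from $r_{q,\varepsilon}^2\ge r^2$, whereas the paper uses the monotonicity of $t\mapsto(1-t^2/R_0^2)^\alpha$; the two are equivalent.
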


\begin{proof}
We contract the distance Hessian comparison with the
positive coefficients $\cL^{ij}$.
At $p$ we use the smoothness of $r^2$; at a cut point
we use the upper support from
Definition~\ref{def-compare}.

\smallskip
\noindent\emph{Case (i).}
All calculations in this case are evaluated at $q$.
Choose a local orthonormal frame normal at $q$ such that
\[
 (\cL^{ij})(q)
 =\operatorname{diag}(W^{-2},1,\ldots,1),
\]
as in Lemma~\ref{lem:ellipticity}.
By \eqref{eq-comp2} and $|\nabla r|_g=1$,
\[
 \begin{aligned}
 \cL r
 &=W^{-2}r_{1;1}+\sum_{i=2}^n r_{i;i}\\
 &\le c_\mu(r)
 \left[W^{-2}(1-r_1^2)+\sum_{i=2}^n(1-r_i^2)\right]\\
 &\le n c_\mu(r).
 \end{aligned}
\]
The definition of $c_\mu$ gives
\[
 r c_\mu(r)\le1+\mu r;
\]
for $\mu>0$ this follows from $s\coth s\le1+s$,
and for $\mu=0$ it is an equality.
Since
\[
 (r^2)_{i;j}=2r_i r_j+2r r_{i;j},
\]
Lemma~\ref{lem:ellipticity} yields
\[
 \begin{aligned}
 \cL(r^2)
 &=2r\cL r+2\cL^{ij}r_i r_j\\
 &\le2nr c_\mu(r)+2\\
 &\le2(n+1)(1+\mu R_0).
 \end{aligned}
\]
Consequently,
\[
 \cL\eta\ge-2(n+1)K(R_0,\mu),\quad
 |\nabla\eta|_g^2
 =\frac{4r^2}{R_0^4}\le\frac4{R_0^2}.
\]

The Hessian chain rule gives
\[
 \begin{aligned}
 \phi_i
 &=\alpha\eta^{\alpha-1}\eta_i,\\
 \phi_{i;j}
 &=\alpha\eta^{\alpha-1}\eta_{i;j}
   +\alpha(\alpha-1)\eta^{\alpha-2}\eta_i\eta_j.
 \end{aligned}
\]
Thus, 
\[
 \begin{aligned}
 \frac{\cL\phi}{\phi}
   -\frac{\cL^{ij}\phi_i\phi_j}{\phi^2}
 &=\alpha\frac{\cL\eta}{\eta}
   -\alpha\frac{\cL^{ij}\eta_i\eta_j}{\eta^2}\\
 &\ge-\frac{2\alpha(n+1)K(R_0,\mu)}{\eta}
       -\frac{4\alpha}{R_0^2\eta^2}\\
 &\ge-C_{n,\alpha}K(R_0,\mu)\eta^{-2}.
 \end{aligned}
\]
Here we used $0<\eta\le1$ and $R_0^{-2}\le K(R_0,\mu)$.
Since $\eta^{-2}=\phi^{-2/\alpha}$, this proves
\eqref{eq:cutoff-L}. Similarly,
\[
 \frac{|\nabla\phi|_g^2}{\phi^2}
 =\alpha^2\frac{|\nabla\eta|_g^2}{\eta^2}
 \le\frac{4\alpha^2}{R_0^2}\eta^{-2}
 \le4\alpha^2K(R_0,\mu)\phi^{-2/\alpha},
\]
which proves \eqref{eq:cutoff-grad}.

\smallskip
\noindent\emph{Case (ii).}
Choose normal coordinates
$(x^1,\ldots,x^n)$ centered at $p$.
In a normal neighborhood, the Gauss lemma gives
\[
 r^2(x)=\sum_{i=1}^n(x^i)^2;
\]
see \cite[Chapter~5]{Petersen2016}.
At $p$,
\[
 g_{ij}=\delta_{ij},\quad
 \Gamma_{ij}^k=0,\quad
 \partial_i(r^2)=0,\quad
 \partial_i\partial_j(r^2)=2\delta_{ij}.
\]
Hence
\[
 \dd(r^2)|_p=0,\quad
 \Hess(r^2)|_p=2g_p.
\]
The chain rule therefore gives
\[
 \phi(p)=1,\quad
 \nabla\phi(p)=0,\quad
 \Hess\phi|_p
 =\alpha\Hess\eta|_p
 =-\frac{2\alpha}{R_0^2}g_p.
\]
By Lemma~\ref{lem:ellipticity},
\[
 \left(\frac{\cL\phi}{\phi}
   -\frac{\cL^{ij}\phi_i\phi_j}{\phi^2}\right)(p)
 =-\frac{2\alpha}{R_0^2}
   \bigl(n-1+W(p)^{-2}\bigr)
 \ge-\frac{2\alpha n}{R_0^2}.
\]
Together with $\phi(p)=1$ and $\nabla\phi(p)=0$,
this proves both estimates at $p$.

\smallskip
\noindent\emph{Case (iii).}
Let $r_{q,\varepsilon}$ satisfy
\eqref{eq-supp} and \eqref{eq-comp3}.
Since $r_{q,\varepsilon}(q)=r(q)\in(0,R_0)$,
we may restrict its neighborhood so that
$0<r_{q,\varepsilon}<R_0$.
The function $t\mapsto\left(1-\frac{t^2}{R_0^2}\right)^\alpha$ is decreasing
on $(0,R_0)$, so
\[
 0<\phi_{q,\varepsilon}\le\phi,\quad
 \phi_{q,\varepsilon}(q)=\phi(q).
\]

We first verify the gradient bound stated in
Remark~\ref{rem1}.
For every unit vector $X\in T_qM$ and sufficiently small
$t$, the support property and the triangle inequality give
\[
 r_{q,\varepsilon}(\exp_q(tX))
 -r_{q,\varepsilon}(q)
 \ge r(\exp_q(tX))-r(q)
 \ge-|t|.
\]
Taking one-sided derivatives at $t=0$ yields
\[
 -1\le\dd r_{q,\varepsilon}(X)\le1.
\]
Since $X$ is arbitrary,
\[
 |\nabla r_{q,\varepsilon}(q)|_g\le1,\quad
 0\le
 (\dd r_{q,\varepsilon}\otimes\dd r_{q,\varepsilon})_q
 \le g_q.
\]

The same frame calculation as in case~(i), with
\eqref{eq-comp3} replacing \eqref{eq-comp2}, gives
\[
 \cL r_{q,\varepsilon}(q)
 \le n c_\mu(r(q))+2n\varepsilon.
\]
Now apply the identities for $\cL(r^2)$ and
$\nabla\eta$ in case~(i), replacing
\[
 r\ \text{by }r_{q,\varepsilon},\quad
 \eta\ \text{by }\eta_{q,\varepsilon}.
\]
Using $r_{q,\varepsilon}(q)=r(q)<R_0$ and the gradient
bound above, we obtain
\[
 \begin{aligned}
 &\cL(r_{q,\varepsilon}^2)(q)
 \le2(n+1)(1+\mu R_0)+4n\varepsilon R_0,\\
 &\cL\eta_{q,\varepsilon}(q)
 \ge-2(n+1)K(R_0,\mu)
       -\frac{4n\varepsilon}{R_0},\\
 &|\nabla\eta_{q,\varepsilon}(q)|_g^2
 \le\frac4{R_0^2}.
 \end{aligned}
\]

Finally, the identities for $\frac{\cL\phi}{\phi}
   -\frac{\cL^{ij}\phi_i\phi_j}{\phi^2}$ and
$\frac{|\nabla\phi|_g^2}{\phi^2}$ in case~(i) apply with
$(\eta,\phi)$ replaced by
$(\eta_{q,\varepsilon},\phi_{q,\varepsilon})$.
Since
\[
 \eta_{q,\varepsilon}(q)=\eta(q),\quad
 \phi_{q,\varepsilon}(q)=\phi(q),\quad
 \frac{\varepsilon}{R_0}
 \le\frac{\varepsilon(1+R_0)}{R_0^2},
\]
the preceding bounds prove
\eqref{eq:supported-cutoff-L} and
\eqref{eq:supported-cutoff-gradient}.
If $\varepsilon(1+R_0)\le1+\mu R_0$, the error term
is bounded by $K(R_0,\mu)$, proving the last assertion.
\end{proof}

\section{Proof of the main theorem}\label{s3}
Fix $0<R_0<R$ and $\alpha\ge2$, and let $\phi$ be the
cutoff from Section~\ref{s25}.
Let $\mathcal T$ be a positive $C^2$ function on an open
interval containing $u(\overline B_{R_0}(p))$, and let
\[
 F\in C^2((0,\infty))\cap C([0,\infty)),\quad
 F(0)=0,\quad F,F'>0\ \text{on }(0,\infty).
\]
We use the test function
\begin{equation}\label{eq-test}
 \Psi=\mathcal T(u)F(z)\phi,
\end{equation}
where $\mathcal T$ and $F$ will be specified in each case.
Primes on $\mathcal T$ and $F$ denote derivatives with
respect to their scalar arguments.
\subsection{The maximum inequality}
We may assume $z(p)>0$, since otherwise the asserted
gradient estimates are immediate.
Let $\Omega_0(p)$ be the connected component of
$B_{R_0}(p)\cap\Omega_*$ containing $p$.
By completeness and the Hopf--Rinow theorem,
$\overline{\Omega_0(p)}\subset\overline B_{R_0}(p)$
is compact.

\begin{lemma}
\label{lem:maximum-localization}\rm
The function $\Psi$ attains its maximum on
$\overline{\Omega_0(p)}$ at a point $q\in\Omega_0(p)$,
with
\[
 0<\Psi(p)\le\Psi(q).
\]
At every such point,
\[
 \nabla\log\Psi(q)=0,\quad
 \cL(\log\Psi)(q)\le0,
\]
 where, if $q\in\Cut(p)$, these relations are understood
with $\phi$ replaced by the smooth lower support
$\phi_{q,\varepsilon}$ defined in
Lemma~\ref{lem:cutoff}\textup{(iii)},
for every sufficiently small $\varepsilon>0$.
\end{lemma}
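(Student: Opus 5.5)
The plan is to show that the maximum of $\Psi$ on the compact set $\overline{\Omega_0(p)}$ is interior, and then to apply the first- and second-order maximum conditions to $\log\Psi$. At cut points we pass to the smooth lower support from Lemma~\ref{lem:cutoff}\textup{(iii)}.

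\emph{Existence and location of the maximum.} First, $\Psi$ is continuous on $\overline{\Omega_0(p)}$:
\begin{itemize}
\item $u\in C^1$, so $z$ is continuous;
\item $F$ is continuous on $[0,\infty)$ and $\mathcal T$ is continuous on a neighbourhood of $u(\overline B_{R_0}(p))$;
\item $\phi$ was extended continuously to $\overline B_{R_0}(p)$.
\end{itemize}
Compactness, which comes from Hopf--Rinow, then gives a maximum point $q$.

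Second, $\Psi$ vanishes on $\partial\Omega_0(p)$. Since $\Omega_0(p)$ is a connected component of the open set $B_{R_0}(p)\cap\Omega_*$, every boundary point $y$ satisfies either $r(y)=R_0$ or $z(y)=0$. Indeed, a point with $r(y)<R_0$ and $z(y)>0$ has a connected neighbourhood inside $B_{R_0}(p)\cap\Omega_*$, so it would lie in $\Omega_0(p)$. In the first case $\phi(y)=0$; in the second $F(0)=0$. Either way $\Psi(y)=0$.

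On the other hand, $\Psi(p)=\mathcal T(u(p))F(z(p))\phi(p)>0$, using $\mathcal T>0$, $F>0$ on $(0,\infty)$ because $z(p)>0$, and $\phi(p)=1$. Hence $\Psi(q)\ge\Psi(p)>0$, and so $q\in\Omega_0(p)$.

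\emph{Noncut points.} Suppose $q\notin\Cut(p)$; the case $q=p$ is included, since $\phi$ is smooth near $p$ by Lemma~\ref{lem:cutoff}\textup{(ii)}. Near $q$, the function $\log\Psi=\log\mathcal T(u)+\log F(z)+\alpha\log\eta$ is $C^2$, because $u\in C^3(\Omega_*)$ and $z\in C^2$ there. Since $\log$ is increasing, $\log\Psi$ attains a local maximum at $q$. This gives $\nabla\log\Psi(q)=0$, and Lemma~\ref{lem:ellipticity} gives $\cL(\log\Psi)(q)\le0$.

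\emph{Cut points.} If $q\in\Cut(p)$, fix a small $\varepsilon>0$ and set $\Psi_\varepsilon=\mathcal T(u)F(z)\phi_{q,\varepsilon}$ on the neighbourhood from Lemma~\ref{lem:cutoff}\textup{(iii)}, shrunk so that it lies in $\Omega_0(p)$. By that lemma, $0<\phi_{q,\varepsilon}\le\phi$ with equality at $q$. Therefore
\[
\Psi_\varepsilon\le\Psi\le\Psi(q)=\Psi_\varepsilon(q)
\]
near $q$, so $\Psi_\varepsilon$ is a $C^2$ function with a local maximum at $q$. The same argument as in the noncut case then gives $\nabla\log\Psi_\varepsilon(q)=0$ and $\cL(\log\Psi_\varepsilon)(q)\le0$, which is the asserted interpretation.

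The only delicate point is the boundary analysis, namely that $\partial\Omega_0(p)\subset\{r=R_0\}\cup\{z=0\}$. This follows from openness of $B_{R_0}(p)\cap\Omega_*$ and the definition of a connected component. Everything else is routine.
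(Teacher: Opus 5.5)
Your proposal is correct and follows essentially the same route as the paper. Continuity and compactness give a maximum, and the vanishing of $\Psi$ on $\partial\Omega_0(p)\subset\{r=R_0\}\cup\{z=0\}$ together with $\Psi(p)>0$ forces every maximum point into $\Omega_0(p)$. At cut points the inequality $\mathcal T(u)F(z)\phi_{q,\varepsilon}\le\Psi\le\Psi(q)$ reduces everything to Lemma~\ref{lem:ellipticity}, and your explicit connected-component justification of the boundary inclusion fills in a step the paper only asserts.
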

\begin{proof}
The function $\Psi$ is continuous on
$\overline{\Omega_0(p)}$ and $\Psi(p)>0$.
Notice that
\[\partial \Omega_{0}(p)\subset \partial B_{R_{0}}(p) \cup \left( B_{R_{0}}(p)\cap \{z=0\}\right).\]
On the first set $\phi=0$, while on the second set $F(z)=0$.
Hence $\Psi=0$ on $\partial\Omega_0(p)$. So every maximum point lies in $\Omega_0(p)$.

Since $z(q)>0$, we have $u\in C^3$ and $z\in C^2$
near $q$.
If $q\notin\Cut(p)$, including $q=p$,
the function $\log\Psi$ is $C^2$ near $q$.
If $q\in\Cut(p)$, then near $q$,
\[
 \mathcal T(u)F(z)\phi_{q,\varepsilon}
 \le\Psi
 \le\Psi(q)
 =\mathcal T(u(q))F(z(q))\phi_{q,\varepsilon}(q).
\]
The corresponding logarithm therefore has a local
maximum at $q$.
Lemma~\ref{lem:ellipticity} gives the differential
conclusions.
\end{proof}

For each choice of $\mathcal T$ and $F$, choose
$q\in\Omega_0(p)$ as in
Lemma~\ref{lem:maximum-localization}.
At a cut point, take $\varepsilon>0$ sufficiently small
that
\[
 \varepsilon(1+R_0)\le1+\mu R_0,
\]
and use the smooth lower support $\phi_{q,\varepsilon}$
defined in Lemma~\ref{lem:cutoff}\textup{(iii)}.
In all differentiated expressions at such a point,
$\phi$ is understood as $\phi_{q,\varepsilon}$;
its value at $q$ remains $\phi(q)$.
The estimates \eqref{eq:cutoff-L} and
\eqref{eq:cutoff-grad} then hold uniformly,
after increasing $C_{n,\alpha}$ once.

All quantities below are evaluated at $q$.
The functions $\mathcal T,F$ and their derivatives
are evaluated at $u(q),z(q)$, respectively, and
$f,\nabla^h f,\nabla^v f$ at $(q,\nabla u(q))$.
Set
\begin{equation}\label{eq-PQ}
 \begin{aligned}
 &P_F
 =\frac{F''}{F}-\frac{(F')^2}{F^2}
   +\frac{3+2z}{2W^4}\frac{F'}{F},\\
 &Q_F
 =-\frac{F''}{F}+\frac{(F')^2}{F^2}
   -\frac{1}{W^2}\frac{F'}{F},
 \end{aligned}
\end{equation}
and write
\begin{equation}\label{eq-I}
 \begin{aligned}
 &I_1
 =2\frac{F'}{F}W
   \langle\nabla^h f,\nabla u\rangle_g,\\ 
 &I_2
 =\frac{F'}{F}W
   \langle\nabla^v f,\nabla z\rangle_g,\\
 &I_3
 =\frac{2}{n}\frac{F'}{F}W^2f^2,\\
 &I_4
 =\frac{\mathcal T'}{\mathcal T}\frac{f}{W},\\
 &I_5
 =\left(
     \frac{\mathcal T''}{\mathcal T}
     -\frac{(\mathcal T')^2}{\mathcal T^2}
   \right)\frac{z}{W^2},\\
 &I_6
 =P_F|\nabla z|_g^2,\\
 &I_7
 =Q_F\frac{\langle\nabla z,\nabla u\rangle_g^2}{W^2},\\
 &I_8
 =\frac{f}{W}
   \left\langle\nabla u,\frac{\nabla\phi}{\phi}
   \right\rangle_g,\\
 &I_9
 =2(n-1)\kappa^2z\frac{F'}{F},\\
 &I_{10}
 =C_{n,\alpha}K(R_0,\mu)\phi^{-\frac{2}{\alpha}}.
 \end{aligned}
\end{equation}
Here $C_{n,\alpha}$ is the uniform constant supplied
by Lemma~\ref{lem:cutoff}.

\begin{lemma}\label{lem:master}\rm
At the maximum point $q$ chosen above,
\begin{equation}\label{eq-gradz-general}
 \nabla z
 =-\frac{F}{F'}
 \left(
   \frac{\mathcal T'}{\mathcal T}\nabla u
   +\frac{\nabla\phi}{\phi}
 \right),
\end{equation}
and
\begin{equation}\label{eq:master}
 I_1+I_2+I_3+I_4+I_5+I_6+I_7
 \le I_8+I_9+I_{10}.
\end{equation}
\end{lemma}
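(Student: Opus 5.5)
The plan is to expand the two conditions supplied by Lemma~\ref{lem:maximum-localization} at $q$. Write
\[
\log\Psi=\log\mathcal T(u)+\log F(z)+\log\phi .
\]
At a cut point, $\phi$ is replaced by $\phi_{q,\varepsilon}$ throughout.

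\emph{First-order condition.} The condition $\nabla\log\Psi(q)=0$ reads
\[
\frac{\mathcal T'}{\mathcal T}\nabla u+\frac{F'}{F}\nabla z+\frac{\nabla\phi}{\phi}=0 .
\]
Since $F'>0$ at $z(q)>0$, solving for $\nabla z$ gives \eqref{eq-gradz-general} at once.

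\emph{Second-order condition.} Using the chain rule in \eqref{eq-L} with $\psi=\log$, the inequality $\cL(\log\Psi)(q)\le0$ expands to
\[
\frac{\mathcal T'}{\mathcal T}\cL u+\Big(\frac{\mathcal T''}{\mathcal T}-\frac{(\mathcal T')^2}{\mathcal T^2}\Big)\cL^{ij}u_iu_j
+\frac{F'}{F}\cL z+\Big(\frac{F''}{F}-\frac{(F')^2}{F^2}\Big)\cL^{ij}z_iz_j
+\frac{\cL\phi}{\phi}-\frac{\cL^{ij}\phi_i\phi_j}{\phi^2}\le0 .
\]
I then substitute the following facts term by term.
\begin{itemize}
\item By \eqref{p1-nondiv}, $\cL u=Wf$.
\item Since $\cL^{ij}u_iu_j=z-z^2/W^2=z/W^2$, the second term is exactly $I_5$.
\item By definition, $\cL^{ij}z_iz_j=|\nabla z|^2-\langle\nabla z,\nabla u\rangle^2/W^2$.
\item The cutoff term is bounded below by $-I_{10}$, using \eqref{eq:cutoff-L} or its supported version from Lemma~\ref{lem:cutoff}.
\item For $\cL z$ I use \eqref{eq:bochner-ineq} multiplied by $F'/F>0$. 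This produces $\frac{3+2z}{2W^4}\frac{F'}{F}|\nabla z|^2$, which combines with the $|\nabla z|^2$ coefficient into $P_F$. It also produces $-\frac{F'}{F}\frac{\langle\nabla z,\nabla u\rangle^2}{W^4}$, which combines with the other $\langle\nabla z,\nabla u\rangle^2$ contribution into $Q_F/W^2$. The Ricci term gives $-I_9$, and $\frac{2}{n}G^2\frac{F'}{F}=I_3$.
\end{itemize}

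\emph{The term $2\langle\nabla u,\nabla G\rangle$ with $G=Wf$.} This step needs the most care, since it is where $I_1$, $I_2$, $I_4$ and $I_8$ arise. By Lemma~\ref{lem:chain},
\[
\nabla_{\nabla u}f=\langle\nabla^hf,\nabla u\rangle+\langle\nabla^vf,\nabla_{\nabla u}\nabla u\rangle,
\qquad \nabla_{\nabla u}\nabla u=\tfrac12\nabla z
\]
by \eqref{eq:grad-z}. Multiplying by $2W\frac{F'}{F}$ gives $I_1+I_2$. The remaining piece is $2f\langle\nabla u,\nabla W\rangle\frac{F'}{F}=\frac{f}{W}\langle\nabla u,\nabla z\rangle\frac{F'}{F}$. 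Here I insert \eqref{eq-gradz-general}: the factor $\frac{F'}{F}\nabla z$ equals $-\frac{\mathcal T'}{\mathcal T}\nabla u-\frac{\nabla\phi}{\phi}$. This yields
\[
-\frac{\mathcal T'}{\mathcal T}\frac{fz}{W}-I_8 .
\]
Combining this with $\frac{\mathcal T'}{\mathcal T}Wf$ from the first term gives $\frac{\mathcal T'}{\mathcal T}f\bigl(W-\frac{z}{W}\bigr)=\frac{\mathcal T'}{\mathcal T}\frac fW=I_4$.

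\emph{Assembly.} Collecting all terms gives $I_1+\dots+I_7-I_8-I_9-I_{10}\le0$, which is \eqref{eq:master}. The main obstacle is purely bookkeeping. One must check that the coefficients of $\langle\nabla z,\nabla u\rangle^2$ assemble exactly into $Q_F/W^2$. One must also check that the cut-point case uses $\phi_{q,\varepsilon}$ consistently in both the gradient relation and the cutoff bound, which is guaranteed by Lemma~\ref{lem:cutoff}(iii) with $\varepsilon(1+R_0)\le1+\mu R_0$.
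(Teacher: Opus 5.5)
Your proposal is correct and follows essentially the same route as the paper. You read off \eqref{eq-gradz-general} from $\nabla\log\Psi(q)=0$, expand $\cL\log\Psi(q)\le0$ via \eqref{eq-L}, insert the weighted Bochner inequality together with Lemma~\ref{lem:chain} for $2\langle\nabla u,\nabla G\rangle_g$, and use the gradient relation to merge the two undifferentiated $f$ terms into $I_4-I_8$ exactly as in \eqref{eq-f1}; your bookkeeping for $P_F$, $Q_F$, $I_3$, $I_9$ and $I_{10}$ matches the paper's.
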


\begin{proof}
We combine the logarithmic maximum argument
\cite{Wang1998} with the chain rule and
the weighted Bochner inequality. All calculations below are evaluated at $q$.

At $q$,
\[
 0=\nabla\log\Psi(q)
 =
	\left(\frac{\mathcal T'(u)}{\mathcal T (u)}\nabla u
       +\frac{F'(z)}{F(z)}\nabla z+\frac{\nabla\phi}{\phi}\right).
\]
This proves \eqref{eq-gradz-general}.

By Lemma~\ref{lem:maximum-localization}, we have
\begin{equation}\label{eq-second-deriv1}
 0\geq (\cL\log\Psi)(q)=\cL\left(\log \mathcal T(u)+\log F(z)+\log \phi\right).
\end{equation}

Choose a local orthonormal frame normal at $q$.
From the definition of $\cL^{ij}$,
\[
 \begin{aligned}
 &\cL^{ij}u_i u_j
 =z-\frac{z^2}{W^2}
 =\frac{z}{W^2},\\
 &\cL^{ij}z_i z_j
 =|\nabla z|_g^2
   -\frac{\langle\nabla z,\nabla u\rangle_g^2}{W^2}.
 \end{aligned}
\]
Applying the composition rule \eqref{eq-L} to the three terms in \eqref{eq-second-deriv1}, we obtain
\begin{equation}\label{eq-second-deriv2}
\begin{aligned}
&\cL\log\mathcal T(u)
 =\frac{\mathcal T'}{\mathcal T}\cL u
   +\left(\frac{\mathcal T''}{\mathcal T}
             -\frac{(\mathcal T')^2}{\mathcal T^2}\right)
                      \frac z{W^2},\\
&\cL\log F(z)
 =\frac{F'}F\cL z
   +\left(\frac{F''}{F}-\frac{(F')^2}{F^2}\right)
     \left(|\nabla z|_g^2
       -\frac{\langle\nabla z,\nabla u\rangle_g^2}{W^2}\right),\\
&\cL\log\phi
  =\frac{\cL\phi}{\phi}-\frac{\cL^{ij}\phi_{i}\phi_{j}}{\phi^2}.
\end{aligned}
\end{equation}

Next, we estimate $\cL z$. Since $G=\cL u=Wf(x,\nabla u)$, Lemma~\ref{lem:chain} gives
\[
\begin{aligned}
2\langle\nabla u,\nabla G\rangle_g
 &=2f\langle\nabla u,\nabla W\rangle_g
      +2W\langle\nabla^h f,\nabla u\rangle_g+2W\langle\nabla^v f,\nabla_{\nabla u}\nabla u\rangle_g.
\end{aligned}
\]
Substituting $\nabla W=\frac{\nabla z}{2W}$ and \eqref{eq:grad-z}
into the first and third terms yields
\begin{equation}\label{eq-UG}
\begin{aligned}
2\langle\nabla u,\nabla G\rangle_g
 &=\frac fW\langle\nabla u,\nabla z\rangle_g
       +2W\langle\nabla^h f,\nabla u\rangle_g+W\langle\nabla^v f,\nabla z\rangle_g.
\end{aligned}
\end{equation}
Combining \eqref{eq-UG} and the weighted Bochner inequality
\eqref{eq:bochner-ineq}, 
\begin{equation}\label{eq-Lz}
\begin{aligned}
\cL z\ge{}&\frac fW\langle\nabla u,\nabla z\rangle_g
 +2W\langle\nabla^h f,\nabla u\rangle_g
 +W\langle\nabla^v f,\nabla z\rangle_g\\
&+\frac2n W^2f^2
 +\frac{3+2z}{2W^4}|\nabla z|_g^2
 -\frac{\langle\nabla z,\nabla u\rangle_g^2}{W^4}
 -2(n-1)\kappa^2z.
\end{aligned}
\end{equation}

Combining \eqref{eq-second-deriv1}, \eqref{eq-second-deriv2},
\eqref{eq-Lz}, and the cutoff estimate \eqref{eq:cutoff-L},
and using $\cL u=Wf$, we obtain
\begin{equation}\label{eq:master-before-first}
	\begin{aligned}
		0&\ge\frac{\mathcal {T}'}{\mathcal {T}} Wf
		+\left(\frac{\mathcal {T}''}{\mathcal {T}}-\frac{(\mathcal {T}')^2}{\mathcal {T}^2}\right)\frac z{W^2}\\
		&\quad +\frac{F'}F\frac fW\langle \nabla u,\nabla z\rangle_g
		+2\frac{F'}F W
			\langle \nabla^h f,\nabla u\rangle_g
		+\frac{F'}F W\langle\nabla^vf,\nabla z\rangle_g\\
		&+\frac2n\frac{F'}F W^2f^2
		-2(n-1)\kappa^2z\frac{F'}F
		-C_{n,\alpha}K(R_{0},\mu)\phi^{-2/\alpha}\\
		&+\left[
		\frac{F''}{F}-\frac{(F')^2}{F^2}
		+\frac{3+2z}{2W^4}\frac{F'}F
		\right]|\nabla z|_g^2\\
		&+\left[
		-\frac{F''}{F}+\frac{(F')^2}{F^2}
		-\frac1{W^2}\frac{F'}F
		\right]
		\frac{\langle\nabla z,\nabla u\rangle_g^2}{W^2}.
	\end{aligned}
\end{equation}
The last two brackets are precisely $P_F$ and $Q_F$ from
\eqref{eq-PQ}.

Finally, we combine the two terms involving the undifferentiated
$f$.  From \eqref{eq-gradz-general}, at $q$, 
\[\begin{aligned}
	 \frac{F'}F\langle \nabla u,\nabla z\rangle_g&=\frac{F'}F\left\langle \nabla u,-\frac{F}{F'}\left(\frac{\mathcal {T}'}{\mathcal {T}} \nabla u+\frac{\nabla\phi}{\phi}\right)\right\rangle_g
	=-\frac{\mathcal {T}'}{\mathcal {T}} z-\left\langle \nabla u,\frac{\nabla \phi}{\phi}\right\rangle_g.
\end{aligned}
\]
Therefore
\begin{equation}\label{eq-f1}
	\begin{aligned}
 \frac{\mathcal {T}'}{\mathcal {T}} Wf+\frac{F'}F\frac fW\langle \nabla u,\nabla z\rangle_g
 &=\frac{\mathcal {T}'}{\mathcal {T}} f\left(W-\frac zW\right)
   -\frac fW\left\langle \nabla u,\frac{\nabla \phi}{\phi}\right\rangle_g\\
 &=\frac{\mathcal {T}'}{\mathcal {T}}\frac fW-\frac fW\left\langle \nabla u,\frac{\nabla \phi}{\phi}\right\rangle_g,
 \end{aligned}
\end{equation}
because $W^2-z=1$.  Substituting \eqref{eq-f1} in
\eqref{eq:master-before-first}, we have \eqref{eq:master}.

\end{proof}

We now prove the five assertions of Theorem~\ref{thm-main}.
We retain the maximum-point and cutoff conventions above,
and use $L$, $K$ and $\Lambda$ as defined in \eqref{eq-scales}.
The elementary inequalities
\begin{equation}\label{eq:log-elementary}
	\begin{aligned}
 \frac{z}{W^2}\le\log(1+z)\le z,\quad \left(1+\frac{\log(1+z)}{2}\right)\frac{z}{W^2}
 \ge\frac{1}{2}\log(1+z),
	\end{aligned}
\end{equation}
will be used repeatedly. The second inequality follows from
\[
 \left(1+\frac{\log(1+z)}{2}\right)\frac{z}{W^2}
 -\frac{1}{2}\log(1+z)
 =
 \frac{z-\frac{1}{2}\log(1+z)}{1+z}
 \ge0.
\]

For parts \textup{(iii)}--\textup{(v)}, 
the conclusion is immediate if $L=0$, and is vacuous if
$L=\infty$. We therefore assume $0<L<\infty$ in these
three parts. In particular,
\[
 0\le u-\inf\nolimits_{B_R(p)}u\le L,\quad
 0\le\sup\nolimits_{B_R(p)}u-u\le L.
\]
The letter $C$ denotes a positive constant that may change
from line to line and depends only on the indicated parameters.
All such constants are independent of $R_0$.

\subsection{Proof of part \textup{(i)}}

Under \eqref{A1}, we use the positive term involving $f^2$
to absorb the terms containing derivatives of $f$.
Choose
\[
 \mathcal T\equiv1,\quad F(z)=z,\quad
 \alpha=\max\left\{2,\frac{2}{\theta}\right\}.
\]
At the maximum point $q$, \eqref{eq-gradz-general} gives
\[
 \nabla z=-z\frac{\nabla\phi}{\phi}.
\]
For this choice of $F$, \eqref{eq-PQ} becomes
\begin{equation}\label{eq-a-PQz}
 \frac{F'}{F}=\frac{1}{z},\quad
 P_F=-\frac{2+z}{2z^2W^4},\quad
 Q_F=\frac{1}{z^2W^2}.
\end{equation}

For arbitrary $\varepsilon_1,\varepsilon_2,\varepsilon_3>0$,
\eqref{eq-f}, \eqref{eq-a-PQz} and Young's inequality give
\[
\begin{aligned}
 I_1
 &\ge-\frac{2\sigma_hW|f|}{\sqrt z}\ge-2\varepsilon_1\frac{W^2f^2}{z}
       -\frac{\sigma_h^2}{2\varepsilon_1},\\
 I_2
 &=-W\left\langle\nabla^v f,
                    \frac{\nabla\phi}{\phi}\right\rangle_g\ge-\varepsilon_2W^2|\nabla^v f|_g^2
       -\frac{1}{4\varepsilon_2}
           \left|\frac{\nabla\phi}{\phi}\right|_g^2,\\
 I_3
 &=\frac{2}{n}\frac{W^2f^2}{z},\quad I_4=0,\quad I_5=0,\\
 I_6
 &=-\frac{2+z}{2W^4}
          \left|\frac{\nabla\phi}{\phi}\right|_g^2
 \ge-\left|\frac{\nabla\phi}{\phi}\right|_g^2,\quad I_7
 =\frac{1}{W^4}
       \left\langle\nabla u,
                    \frac{\nabla\phi}{\phi}\right\rangle_g^2
 \ge0,\\
 I_8
 &\le\frac{|f|\sqrt z}{W}
              \left|\frac{\nabla\phi}{\phi}\right|_g\le\varepsilon_3\frac{W^2f^2}{z}
       +\frac{1}{4\varepsilon_3}\frac{z^2}{W^4}
              \left|\frac{\nabla\phi}{\phi}\right|_g^2\le\varepsilon_3\frac{W^2f^2}{z}
       +\frac{1}{4\varepsilon_3}
              \left|\frac{\nabla\phi}{\phi}\right|_g^2,\\
 I_9&=2(n-1)\kappa^2,\quad I_{10}
 =C_{n,\alpha}K(R_0,\mu)\phi^{-\frac{2}{\alpha}}.
\end{aligned}
\]
Substitution into \eqref{eq:master} yields
\[
\begin{aligned}
 &\left(\frac{2}{n}-2\varepsilon_1-\varepsilon_3\right)
                  \frac{W^2f^2}{z}
       -\varepsilon_2W^2|\nabla^v f|_g^2\\
 &\quad\le
 \left(1+\frac{1}{4\varepsilon_2}
          +\frac{1}{4\varepsilon_3}\right)
             \left|\frac{\nabla\phi}{\phi}\right|_g^2
 +\frac{\sigma_h^2}{2\varepsilon_1}
 +2(n-1)\kappa^2
 +C_{n,\alpha}K(R_0,\mu)\phi^{-\frac{2}{\alpha}}.
\end{aligned}
\]
We now choose
\[
 \varepsilon_1=\frac{1}{4n},\quad
 \varepsilon_2=\frac{m_1}{n},\quad
 \varepsilon_3=\frac{1}{2n}.
\]
Then
\[
 \frac{2}{n}-2\varepsilon_1-\varepsilon_3
 =\frac{1}{n},
\]
so the preceding inequality becomes
\[
\begin{aligned}
 \frac{W^2}{nz}
       \left(f^2-m_1z|\nabla^v f|_g^2\right)
 &\le
 \left(1+\frac{n}{4m_1}+\frac{n}{2}\right)
              \left|\frac{\nabla\phi}{\phi}\right|_g^2\\
 &\quad+2n\sigma_h^2+2(n-1)\kappa^2
       +C_{n,\alpha}K(R_0,\mu)\phi^{-\frac{2}{\alpha}}.
\end{aligned}
\]
By \eqref{A1} and \eqref{eq:cutoff-grad},
\[
 \frac{m_2}{n}z^\theta
 \le\frac{m_2}{n}W^2z^{\theta-1}
 \le C\Lambda(R_0,\mu,\kappa,\sigma_h)
                      \phi^{-\frac{2}{\alpha}}.
\]

Since $\phi(p)=1$ and $q$ maximizes $z\phi$,
\[
\begin{aligned}
 z(p)
 &\le z(q)\phi(q)\\
 &\le C\left[\Lambda(R_0,\mu,\kappa,\sigma_h)\right]^{
                \frac{1}{\theta}}
          \phi(q)^{1-\frac{2}{\alpha\theta}}\\
 &\le C\left[\Lambda(R_0,\mu,\kappa,\sigma_h)\right]^{
                \frac{1}{\theta}}.
\end{aligned}
\]
Taking square roots proves \eqref{eq:a-grad} at radius $R_0$,
with $C=C(n,m_1,m_2,\theta)$.

\subsection{Proof of part \textup{(ii)}}

The logarithmic factors in \eqref{A2} lead us to choose
\[
 \mathcal T\equiv1,\quad F(z)=\log(1+z),\quad
 \alpha=\max\left\{
   2,\frac{2}{2\theta-1},\frac{1}{\theta-1}
 \right\}.
\]
At $q$,
\[
 \nabla z=-W^2\log(1+z)\frac{\nabla\phi}{\phi}.
\]
The corresponding coefficients are
\begin{equation}\label{eq-b-PQlog}
\begin{aligned}
 \frac{F'}{F}
 &=\frac{1}{W^2\log(1+z)},\quad
 \frac{F''}{F}
 =-\frac{1}{W^4\log(1+z)},\\
 P_F
 &=\frac{\log(1+z)-2W^2}
          {2W^6\log^2(1+z)},\quad
 Q_F=\frac{1}{W^4\log^2(1+z)}.
\end{aligned}
\end{equation}
For arbitrary $\varepsilon_1,\varepsilon_2>0$,
\eqref{eq-f}, \eqref{eq:log-elementary}, \eqref{eq-b-PQlog}, and
Young's inequality imply
\[
\begin{aligned}
 I_1
 &\ge-\frac{2\sigma_h|f|\sqrt z}
              {W\log(1+z)}\ge-2\varepsilon_1\frac{f^2}{\log(1+z)}
       -\frac{\sigma_h^2}{2\varepsilon_1}
                    \frac{z}{W^2\log(1+z)}\\
 &\ge-2\varepsilon_1\frac{f^2}{\log(1+z)}
       -\frac{\sigma_h^2}{2\varepsilon_1},\\
 I_2
 &=-W\left\langle\nabla^v f,
                    \frac{\nabla\phi}{\phi}\right\rangle_g\ge-\varepsilon_2W^2|\nabla^v f|_g^2
       -\frac{1}{4\varepsilon_2}
              \left|\frac{\nabla\phi}{\phi}\right|_g^2,\\
 I_3
 &=\frac{2}{n}\frac{f^2}{\log(1+z)},\quad I_4=0,\quad I_5=0,\\
 I_6
 &=\left(-1+\frac{\log(1+z)}{2W^2}\right)
              \left|\frac{\nabla\phi}{\phi}\right|_g^2
 \ge-\left|\frac{\nabla\phi}{\phi}\right|_g^2,\quad I_7
 =\frac{1}{W^2}
       \left\langle\nabla u,
                    \frac{\nabla\phi}{\phi}\right\rangle_g^2
 \ge0,\\
 I_8
 &\le |f|\sqrt{\frac{z}{W^2}}
                 \left|\frac{\nabla\phi}{\phi}\right|_g
 \le |f|\left|\frac{\nabla\phi}{\phi}\right|_g,\\
 I_9
 &=2(n-1)\kappa^2\frac{z}{W^2\log(1+z)}
 \le2(n-1)\kappa^2,\quad I_{10}
 =C_{n,\alpha}K(R_0,\mu)\phi^{-\frac{2}{\alpha}}.
\end{aligned}
\]
Therefore, \eqref{eq:master} gives
\[
\begin{aligned}
 &\left(\frac{2}{n}-2\varepsilon_1\right)
                    \frac{f^2}{\log(1+z)}
       -\varepsilon_2W^2|\nabla^v f|_g^2\\
 &\quad\le
 \left(1+\frac{1}{4\varepsilon_2}\right)
              \left|\frac{\nabla\phi}{\phi}\right|_g^2
 +|f|\left|\frac{\nabla\phi}{\phi}\right|_g +
 \frac{\sigma_h^2}{2\varepsilon_1}
 +2(n-1)\kappa^2
 +C_{n,\alpha}K(R_0,\mu)\phi^{-\frac{2}{\alpha}}.
\end{aligned}
\]
Choose
\[
 \varepsilon_1=\frac{1}{2n},\quad
 \varepsilon_2=\frac{m_1}{n}.
\]
The left-hand side then equals
\[
 \frac{f^2-m_1W^2\log(1+z)|\nabla^v f|_g^2}
      {n\log(1+z)}.
\]
Using \eqref{A2} and Lemma \ref{lem:cutoff}
we obtain
\[
\begin{aligned}
 \log^{2\theta-1}(1+z)
 \le C\Big(
 &\Lambda(R_0,\mu,\kappa,\sigma_h)
                   \phi^{-\frac{2}{\alpha}}+\log^\theta(1+z)
    \left[\Lambda(R_0,\mu,\kappa,\sigma_h)\right]^{
                        \frac{1}{2}}
                   \phi^{-\frac{1}{\alpha}}
 \Big).
\end{aligned}
\]

At least one term on the right is at least half of
the left-hand side. Since $\theta>1$, this implies
\[
\begin{aligned}
 \log(1+z)\le C\max\Big\{
 &\left[\Lambda(R_0,\mu,\kappa,\sigma_h)\right]^{
                       \frac{1}{2\theta-1}}
       \phi^{-\frac{2}{\alpha(2\theta-1)}},\ \ \left[\Lambda(R_0,\mu,\kappa,\sigma_h)\right]^{
                       \frac{1}{2(\theta-1)}}
       \phi^{-\frac{1}{\alpha(\theta-1)}}
 \Big\}.
\end{aligned}
\]
Our choice of $\alpha$ makes both powers of $\phi$
nonnegative after multiplication by $\phi$.
The maximality of $\log(1+z)\phi$ therefore yields
\[
\begin{aligned}
 \log(1+z(p))
 &\le\log(1+z(q))\phi(q)\\
 &\le C\max\left\{
 \left[\Lambda(R_0,\mu,\kappa,\sigma_h)\right]^{
                        \frac{1}{2(\theta-1)}},\ \ 
 \left[\Lambda(R_0,\mu,\kappa,\sigma_h)\right]^{
                        \frac{1}{2\theta-1}}
 \right\}.
\end{aligned}
\]

If $\Lambda(R_0,\mu,\kappa,\sigma_h)\le1$,  the second
power dominates, and using $e^{Ct}-1\le Ce^C t$ for $0\le t\le1$, exponentiation gives
\[
 z(p)\le
 C\left[\Lambda(R_0,\mu,\kappa,\sigma_h)\right]^{
                       \frac{1}{2\theta-1}}.
\]
If $\Lambda(R_0,\mu,\kappa,\sigma_h)\ge1$, the first
power dominates. Hence, in both cases,
\[
\begin{aligned}
 z(p)\le C\max\Big\{
 &\exp\!\left(
 C\left[\Lambda(R_0,\mu,\kappa,\sigma_h)\right]^{
                      \frac{1}{2(\theta-1)}}
 \right)-1,\ \ \left[\Lambda(R_0,\mu,\kappa,\sigma_h)\right]^{
                      \frac{1}{2\theta-1}}
 \Big\},
\end{aligned}
\]
where $C=C(n,m_1,m_2,m_3,\theta)$. This proves \eqref{eq:b-grad} at radius $R_0$.

\subsection{Proof of part \textup{(iii)}}

Condition \eqref{A3} prescribes the modulus of $f$
as a function of the fibre norm. Since this modulus
is positive off the zero section, the sign of $f$
is locally constant there.

To compute its derivatives, choose normal coordinates
at $x$. At $(x,\xi)$ with $\xi\ne0$, the first derivatives
of the metric and the Christoffel symbols vanish at $x$.
Differentiating \eqref{A3} in these bundle coordinates gives
\[
 \frac{\partial f}{\partial x^i}(x,\xi)=0,\quad
 \frac{\partial f}{\partial\xi^i}(x,\xi)
 =\frac{2\theta f(x,\xi)\xi_i}
        {(1+|\xi|_g^2)\log(1+|\xi|_g^2)}.
\]
Returning to the horizontal and vertical derivatives,
we obtain
\[
 \nabla^h f=0,\quad
 \nabla^v f(x,\xi)
 =\frac{2\theta f(x,\xi)}
        {(1+|\xi|_g^2)\log(1+|\xi|_g^2)}\,\xi.
\]

On the connected set $\Omega_0(p)$, continuity gives
a fixed $\delta\in\{-1,1\}$ such that
\[
 f(x,\nabla u(x))
 =\delta m_1\log^\theta(1+z(x)).
\]
Thus, along this component,
\[
 \nabla^h f=0,\quad
 \nabla^v f
 =\frac{2\delta m_1\theta}{W^2}
       \log^{\theta-1}(1+z)\nabla u.
\]

Fix $0<\beta<1$ and choose
\[
 \alpha=\max\left\{
  2,\frac{2}{\theta},\frac{4}{1-\beta}
 \right\},\quad F(z)=z,
\]
together with
\[
 \mathcal T(u)=
 \begin{cases}
 \left(L+1+u-\inf\nolimits_{B_R(p)}u\right)^{-\frac{1}{b}},
       &\delta=1,\\
 \left(L+1+\sup\nolimits_{B_R(p)}u-u\right)^{-\frac{1}{b}},
       &\delta=-1,
 \end{cases}
\]
where $b>0$ will be fixed below.
The choice of sign ensures that the contribution
of $\nabla^v f$ involving $\mathcal T'$ is positive.
Moreover,
\[
 [2(L+1)]^{-\frac{1}{b}}
 \le\mathcal T\le(L+1)^{-\frac{1}{b}},
\]
and
\[
 \frac{\mathcal T'}{\mathcal T}
 =-\frac{\delta}{b}\mathcal T^b,\quad
 \frac{\mathcal T''}{\mathcal T}
   -\left(\frac{\mathcal T'}{\mathcal T}\right)^2
 =b\left(\frac{\mathcal T'}{\mathcal T}\right)^2.
\]
In particular,
\[
 -\delta\frac{\mathcal T'}{\mathcal T}
 =\left|\frac{\mathcal T'}{\mathcal T}\right|
 \ge\frac{1}{2b(L+1)}.
\]

At the maximum point $q$,
\[
 \nabla z
 =-z\left(
      \frac{\mathcal T'}{\mathcal T}\nabla u
      +\frac{\nabla\phi}{\phi}
     \right).
\]

For arbitrary
$\varepsilon_1,\varepsilon_2,\varepsilon_3,\varepsilon_4>0$, \eqref{eq-a-PQz} and Young's inequality give
\[
\begin{aligned}
 I_1&=0,\\
 I_2
 &=2m_1\theta
       \left|\frac{\mathcal T'}{\mathcal T}\right|
       \frac{z}{W}\log^{\theta-1}(1+z)-\frac{2\delta m_1\theta}{W}
       \log^{\theta-1}(1+z)
       \left\langle\nabla u,
                    \frac{\nabla\phi}{\phi}\right\rangle_g\\
 &\ge
 2m_1\theta
       \left|\frac{\mathcal T'}{\mathcal T}\right|
       \frac{z}{W}\log^{\theta-1}(1+z)
 -\varepsilon_1\frac{z}{W^2}\log^{2\theta-2}(1+z)-\frac{m_1^2\theta^2}{\varepsilon_1}
              \left|\frac{\nabla\phi}{\phi}\right|_g^2,\\
 I_3
 &=\frac{2m_1^2}{n}\frac{W^2}{z}
                          \log^{2\theta}(1+z),\\
 I_4
 &=-m_1\left|\frac{\mathcal T'}{\mathcal T}\right|
                         \frac{\log^\theta(1+z)}{W}\ge-\varepsilon_2\frac{W^2}{z}\log^{2\theta}(1+z)
       -\frac{m_1^2}{4\varepsilon_2}
          \left(\frac{\mathcal T'}{\mathcal T}\right)^2
          \frac{z}{W^4},\\
 I_5
 &=b\left(\frac{\mathcal T'}{\mathcal T}\right)^2
                          \frac{z}{W^2},\quad
I_6
 =-\frac{2+z}{2W^4}
       \left|
         \frac{\mathcal T'}{\mathcal T}\nabla u
         +\frac{\nabla\phi}{\phi}
       \right|_g^2,\quad I_7
 =\frac{1}{W^4}
       \left(
         \frac{\mathcal T'}{\mathcal T}z
         +\left\langle\nabla u,
                       \frac{\nabla\phi}{\phi}\right\rangle_g
       \right)^2,\\
	   I_8
 &\le\frac{m_1\sqrt z}{W}\log^\theta(1+z)
              \left|\frac{\nabla\phi}{\phi}\right|_g\le\varepsilon_3\frac{W^2}{z}\log^{2\theta}(1+z)
       +\frac{m_1^2}{4\varepsilon_3}
              \left|\frac{\nabla\phi}{\phi}\right|_g^2,\\
 I_9&=2(n-1)\kappa^2,\quad I_{10}
 =C_{n,\alpha}K(R_0,\mu)\phi^{-\frac{2}{\alpha}}.
\end{aligned}
\]
Expanding the squares in $I_6$ and $I_7$ gives
\[
\begin{aligned}
 I_5+I_6+I_7
 &=\left(\frac{\mathcal T'}{\mathcal T}\right)^2
       \frac{z}{W^2}
       \left(b+\frac{z-2}{2W^2}\right)\\
 &\quad+
 \frac{\mathcal T'}{\mathcal T}\frac{z-2}{W^4}
       \left\langle\nabla u,
                    \frac{\nabla\phi}{\phi}\right\rangle_g\\
 &\quad-\frac{2+z}{2W^4}
              \left|\frac{\nabla\phi}{\phi}\right|_g^2
       +\frac{1}{W^4}
        \left\langle\nabla u,
                     \frac{\nabla\phi}{\phi}\right\rangle_g^2.
\end{aligned}
\]
Using
\[
 \frac{z-2}{2W^2}\ge-1,\quad
 \frac{|z-2|}{W^3}\le2,\quad
 \frac{2+z}{2W^4}\le1,
\]
we estimate the mixed term by
\[
\begin{aligned}
 \left|
 \frac{\mathcal T'}{\mathcal T}\frac{z-2}{W^4}
       \left\langle\nabla u,
                    \frac{\nabla\phi}{\phi}\right\rangle_g
 \right|
 &\le
 2\left|\frac{\mathcal T'}{\mathcal T}\right|
       \frac{\sqrt z}{W}
       \left|\frac{\nabla\phi}{\phi}\right|_g\\
 &\le
 \varepsilon_4
       \left(\frac{\mathcal T'}{\mathcal T}\right)^2
       \frac{z}{W^2}
 +\frac{1}{\varepsilon_4}
       \left|\frac{\nabla\phi}{\phi}\right|_g^2.
\end{aligned}
\]
Hence
\[
 I_5+I_6+I_7
 \ge
 (b-1-\varepsilon_4)
       \left(\frac{\mathcal T'}{\mathcal T}\right)^2
       \frac{z}{W^2}
 -\left(1+\frac{1}{\varepsilon_4}\right)
       \left|\frac{\nabla\phi}{\phi}\right|_g^2.
\]

By \eqref{eq:log-elementary},
\[
 \frac{z}{W^2}\log^{2\theta-2}(1+z)
 \le\frac{W^2}{z}\log^{2\theta}(1+z).
\]
Combining the preceding estimates in \eqref{eq:master},
and using $W\ge1$, gives
\begin{equation}\label{eq-part3-1}
\begin{aligned}
 &2m_1\theta
       \left|\frac{\mathcal T'}{\mathcal T}\right|
       \frac{z}{W}\log^{\theta-1}(1+z)\\
 &\quad+
 \left(\frac{2m_1^2}{n}
       -\varepsilon_1-\varepsilon_2-\varepsilon_3\right)
       \frac{W^2}{z}\log^{2\theta}(1+z)\\
 &\quad+
 \left(b-1-\varepsilon_4
             -\frac{m_1^2}{4\varepsilon_2}\right)
       \left(\frac{\mathcal T'}{\mathcal T}\right)^2
       \frac{z}{W^2}\\
 &\le
 \left(
 1+\frac{1}{\varepsilon_4}
  +\frac{m_1^2\theta^2}{\varepsilon_1}
  +\frac{m_1^2}{4\varepsilon_3}
 \right)
       \left|\frac{\nabla\phi}{\phi}\right|_g^2
 +2(n-1)\kappa^2
 +C_{n,\alpha}K(R_0,\mu)\phi^{-\frac{2}{\alpha}}.
\end{aligned}
\end{equation}

We now choose
\[
 \varepsilon_1=\varepsilon_2=\varepsilon_3
      =\frac{m_1^2}{3n},\quad
 \varepsilon_4=1,\quad
 b=3+\frac{3n}{4}.
\]
Then
\[
 \frac{2m_1^2}{n}
       -\varepsilon_1-\varepsilon_2-\varepsilon_3
 =\frac{m_1^2}{n},\quad
 b-1-\varepsilon_4-\frac{m_1^2}{4\varepsilon_2}=1.
\]
Combining \eqref{eq-part3-1} and Lemma \ref{lem:cutoff} yields
\[
\begin{aligned}
 &2m_1\theta
       \left|\frac{\mathcal T'}{\mathcal T}\right|
       \frac{z}{W}\log^{\theta-1}(1+z)
 +\frac{m_1^2}{n}\frac{W^2}{z}\log^{2\theta}(1+z)+
       \left(\frac{\mathcal T'}{\mathcal T}\right)^2
       \frac{z}{W^2}\\
 &\quad \le C\Lambda(R_0,\mu,\kappa,0)
                       \phi^{-\frac{2}{\alpha}}.
\end{aligned}
\]
Since
\[
 \left|\frac{\mathcal T'}{\mathcal T}\right|
 \ge\frac{1}{2b(L+1)},
\]
we conclude that
\[
 \frac{z}{W}\log^{\theta-1}(1+z)
 \le C(L+1)\Lambda(R_0,\mu,\kappa,0)
                       \phi^{-\frac{2}{\alpha}}.
\]

If $0<z(q)\le1$, then $\theta\le1$ and
\eqref{eq:log-elementary} imply
\[
 \frac{z}{W}\log^{\theta-1}(1+z)
 \ge\frac{1}{\sqrt2}z^\theta.
\]
Hence
\[
\begin{aligned}
 z(q)\phi(q)
 &\le C(L+1)^{\frac{1}{\theta}}
       \left[\Lambda(R_0,\mu,\kappa,0)\right]^{
                         \frac{1}{\theta}}
       \phi(q)^{1-\frac{2}{\alpha\theta}}\\
 &\le C(L+1)^{\frac{1}{\theta}}
       \left[\Lambda(R_0,\mu,\kappa,0)\right]^{
                         \frac{1}{\theta}}.
\end{aligned}
\]
If $z(q)\ge1$, the elementary logarithmic growth bound
\[
 \log^{1-\theta}(1+z)
 \le C_{\theta,\beta}z^{\frac{\beta}{2}}
\]
and $W\le\sqrt{2z}$ give
\[
 \frac{z}{W}\log^{\theta-1}(1+z)
 \ge C_{\theta,\beta}^{-1}
                    z^{\frac{1-\beta}{2}}.
\]
It follows that
\[
\begin{aligned}
 z(q)\phi(q)
 &\le C(L+1)^{\frac{2}{1-\beta}}
       \left[\Lambda(R_0,\mu,\kappa,0)\right]^{
                         \frac{2}{1-\beta}}
       \phi(q)^{1-\frac{4}{\alpha(1-\beta)}}\\
 &\le C(L+1)^{\frac{2}{1-\beta}}
       \left[\Lambda(R_0,\mu,\kappa,0)\right]^{
                         \frac{2}{1-\beta}}.
\end{aligned}
\]
Both final inequalities follow from our choice of $\alpha$.

Finally, maximality and the bounds for $\mathcal T$ imply
\[
 z(p)
 \le\frac{\mathcal T(u(q))}{\mathcal T(u(p))}
              z(q)\phi(q)
 \le2^{\frac{1}{b}}z(q)\phi(q).
\]
Combining the two cases and taking square roots yields
\[
\begin{aligned}
 |\nabla u(p)|_g\le\max\Big\{
 &C_1(L+1)^{\frac{1}{2\theta}}
       \left[\Lambda(R_0,\mu,\kappa,0)\right]^{
                          \frac{1}{2\theta}},\ \ C_2(L+1)^{\frac{1}{1-\beta}}
       \left[\Lambda(R_0,\mu,\kappa,0)\right]^{
                          \frac{1}{1-\beta}}
 \Big\}.
\end{aligned}
\]
Here both constants depend only on $n,m_1,\theta,\beta$.
This proves \eqref{eq:c-grad} at radius $R_0$.

\subsection{Proof of part \textup{(iv)}}

Condition \eqref{A4} controls the vertical derivative
by $f^2$, while the dependence of $\mathcal T$ on $u$
provides the remaining positive term.
Choose
\[
 \mathcal T(u)
 =\left(L+1+u-\inf\nolimits_{B_R(p)}u\right)^{-\frac{1}{b}},\quad F(z)=\log(1+z),\quad
 \alpha=2,
\]
where $b>0$ will be fixed below.
Then
\[
 [2(L+1)]^{-\frac{1}{b}}
 \le\mathcal T\le(L+1)^{-\frac{1}{b}},
\]
and
\[
 \frac{\mathcal T'}{\mathcal T}
 =-\frac{1}{b\left(L+1+u-\inf\nolimits_{B_R(p)}u\right)},\quad
 \frac{\mathcal T''}{\mathcal T}
   -\left(\frac{\mathcal T'}{\mathcal T}\right)^2
 =b\left(\frac{\mathcal T'}{\mathcal T}\right)^2.
\]
At $q$, \eqref{eq-gradz-general} reads
\[
 \nabla z
 =-W^2\log(1+z)
    \left(
      \frac{\mathcal T'}{\mathcal T}\nabla u
      +\frac{\nabla\phi}{\phi}
    \right).
\]
Let
$\varepsilon_1,\varepsilon_2,\varepsilon_3,\varepsilon_4>0$.
Using \eqref{eq-f}, \eqref{A4}, 
\eqref{eq:log-elementary}, \eqref{eq-b-PQlog}, and
Young's inequality, we obtain
\[
\begin{aligned}
 I_1
 &\ge-2\varepsilon_1\frac{f^2}{\log(1+z)}
       -\frac{\sigma_h^2}{2\varepsilon_1},\\
 I_2
 &=-W\left\langle
        \nabla^v f,
        \frac{\mathcal T'}{\mathcal T}\nabla u
        +\frac{\nabla\phi}{\phi}
       \right\rangle_g\\
 &\ge-\varepsilon_2W^4|\nabla^v f|_g^2
       -\frac{1}{4\varepsilon_2W^2}
        \left|
          \frac{\mathcal T'}{\mathcal T}\nabla u
          +\frac{\nabla\phi}{\phi}
        \right|_g^2\\
 &\ge-\frac{\varepsilon_2}{m_1}
                    \frac{f^2}{\log(1+z)}
       -\frac{1}{2\varepsilon_2}
          \left(\frac{\mathcal T'}{\mathcal T}\right)^2
          \frac{z}{W^2}
       -\frac{1}{2\varepsilon_2}
          \left|\frac{\nabla\phi}{\phi}\right|_g^2,\\
 I_3
 &=\frac{2}{n}\frac{f^2}{\log(1+z)},\\
 I_4
 &=\frac{\mathcal T'}{\mathcal T}\frac{f}{W}\ge-\varepsilon_3\frac{f^2}{\log(1+z)}
       -\frac{1}{4\varepsilon_3}
          \left(\frac{\mathcal T'}{\mathcal T}\right)^2
          \frac{\log(1+z)}{W^2}\\
 &\ge-\varepsilon_3\frac{f^2}{\log(1+z)}
       -\frac{1}{4\varepsilon_3}
          \left(\frac{\mathcal T'}{\mathcal T}\right)^2
          \frac{z}{W^2},\\
 I_5
 &=b\left(\frac{\mathcal T'}{\mathcal T}\right)^2
                      \frac{z}{W^2},\\ I_6
 &=\left(-1+\frac{\log(1+z)}{2W^2}\right)
       \left|
         \frac{\mathcal T'}{\mathcal T}\nabla u
         +\frac{\nabla\phi}{\phi}
       \right|_g^2,\\
I_7
 &=\frac{1}{W^2}
       \left(
         \frac{\mathcal T'}{\mathcal T}z
         +\left\langle\nabla u,
                       \frac{\nabla\phi}{\phi}\right\rangle_g
       \right)^2,\\
	    I_8
 &\le m_2\sqrt{\frac{z}{W^2}}
              \left|\frac{\nabla\phi}{\phi}\right|_g,\\
 I_9
 &=2(n-1)\kappa^2\frac{z}{W^2\log(1+z)}
 \le2(n-1)\kappa^2,\ \ I_{10}
 =C_{n,2}K(R_0,\mu)\phi^{-1}.
\end{aligned}
\]
Here the first inequality in \eqref{A4} is used
in the last estimate of $I_2$, while the bound
$|f|\le m_2$ is used to estimate $I_8$.

Expanding the squares in $I_6$ and $I_7$ gives
\[
\begin{aligned}
 I_5+I_6+I_7
 &=\left(\frac{\mathcal T'}{\mathcal T}\right)^2
       \frac{z}{W^2}
       \left(b-1+\frac{\log(1+z)}{2}\right)\\
 &\quad+
 \frac{\mathcal T'}{\mathcal T}
       \frac{\log(1+z)-2}{W^2}
       \left\langle\nabla u,
                    \frac{\nabla\phi}{\phi}\right\rangle_g\\
 &\quad+
 \left(-1+\frac{\log(1+z)}{2W^2}\right)
              \left|\frac{\nabla\phi}{\phi}\right|_g^2
 +\frac{1}{W^2}
       \left\langle\nabla u,
                    \frac{\nabla\phi}{\phi}\right\rangle_g^2.
\end{aligned}
\]
Since $|\log(1+z)-2|\le2W$, Young's inequality gives
\[
\begin{aligned}
 \left|
 \frac{\mathcal T'}{\mathcal T}
       \frac{\log(1+z)-2}{W^2}
       \left\langle\nabla u,
                    \frac{\nabla\phi}{\phi}\right\rangle_g
 \right|
 \le
 2\left|\frac{\mathcal T'}{\mathcal T}\right|
       \frac{\sqrt z}{W}
       \left|\frac{\nabla\phi}{\phi}\right|_g\le
 \varepsilon_4
       \left(\frac{\mathcal T'}{\mathcal T}\right)^2
       \frac{z}{W^2}
 +\frac{1}{\varepsilon_4}
       \left|\frac{\nabla\phi}{\phi}\right|_g^2.
\end{aligned}
\]
Consequently,
\begin{equation}\label{eq-d-I5672}
\begin{aligned}
 I_5+I_6+I_7
 &\ge
 \left(b-1-\varepsilon_4+\frac{\log(1+z)}{2}\right)
       \left(\frac{\mathcal T'}{\mathcal T}\right)^2
       \frac{z}{W^2}-
 \left(1+\frac{1}{\varepsilon_4}\right)
       \left|\frac{\nabla\phi}{\phi}\right|_g^2.
\end{aligned}
\end{equation}

Substituting these estimates into \eqref{eq:master} yields
\begin{equation}\label{eq-part4-1}
\begin{aligned}
 &\left(
   \frac{2}{n}-2\varepsilon_1
   -\frac{\varepsilon_2}{m_1}-\varepsilon_3
  \right)\frac{f^2}{\log(1+z)}\\
 &\quad+
 \left(
   b-1-\varepsilon_4
   -\frac{1}{2\varepsilon_2}
   -\frac{1}{4\varepsilon_3}
   +\frac{\log(1+z)}{2}
 \right)
       \left(\frac{\mathcal T'}{\mathcal T}\right)^2
       \frac{z}{W^2}\\
 &\le
 m_2\sqrt{\frac{z}{W^2}}
              \left|\frac{\nabla\phi}{\phi}\right|_g
 +\left(
   1+\frac{1}{\varepsilon_4}
    +\frac{1}{2\varepsilon_2}
  \right)
              \left|\frac{\nabla\phi}{\phi}\right|_g^2\\
 &\quad+
 \frac{\sigma_h^2}{2\varepsilon_1}
 +2(n-1)\kappa^2
 +C_{n,2}K(R_0,\mu)\phi^{-1}.
\end{aligned}
\end{equation}
We now choose
\[
 \varepsilon_1=\frac{1}{4n},\quad
 \varepsilon_2=\frac{m_1}{4n},\quad
 \varepsilon_3=\frac{1}{4n},\quad
 \varepsilon_4=1,\quad
 b=3+\frac{2n}{m_1}+n.
\]
Then
\[
\begin{aligned}
 \frac{2}{n}-2\varepsilon_1
       -\frac{\varepsilon_2}{m_1}-\varepsilon_3
 =\frac{1}{n},\quad 
 b-1-\varepsilon_4
       -\frac{1}{2\varepsilon_2}
       -\frac{1}{4\varepsilon_3}
 &=1.
\end{aligned}
\]
Using \eqref{eq-part4-1} and Lemma \ref{lem:cutoff},
we conclude that
\[
\begin{aligned}
 &\frac{1}{n}\frac{f^2}{\log(1+z)}
 +\left(1+\frac{\log(1+z)}{2}\right)
       \left(\frac{\mathcal T'}{\mathcal T}\right)^2
       \frac{z}{W^2}\\
 &\quad\le
 C\Big(
 \sqrt{\frac{z}{W^2}}
       \left[\Lambda(R_0,\mu,\kappa,\sigma_h)\right]^{
                         \frac{1}{2}}
       \phi^{-\frac{1}{2}}
 +\Lambda(R_0,\mu,\kappa,\sigma_h)\phi^{-1}
 \Big).
\end{aligned}
\]

Since
\[
 \left|\frac{\mathcal T'}{\mathcal T}\right|^{-1}
 =b\left(L+1+u-\inf\nolimits_{B_R(p)}u\right)
 \le2b(L+1),
\]
inequality \eqref{eq:log-elementary} and
$\frac{z}{W^2}\le1$ give
\[
\begin{aligned}
 \log(1+z)
 \le C(L+1)^2\Big(
 &[\Lambda(R_0,\mu,\kappa,\sigma_h)]^{\frac{1}{2}}
       \phi^{-\frac{1}{2}}+\Lambda(R_0,\mu,\kappa,\sigma_h)\phi^{-1}
 \Big).
\end{aligned}
\]
Multiplying by $\phi(q)$ and using $0<\phi(q)\le1$,
we obtain
\[
\begin{aligned}
 \log(1+z(q))\phi(q)
 &\le C(L+1)^2\Big(
 [\Lambda(R_0,\mu,\kappa,\sigma_h)]^{\frac{1}{2}}
 +\Lambda(R_0,\mu,\kappa,\sigma_h)
 \Big)\\
 &\le C(L+1)^2
 \max\left\{
 \Lambda(R_0,\mu,\kappa,\sigma_h),\ \
 [\Lambda(R_0,\mu,\kappa,\sigma_h)]^{\frac{1}{2}}
 \right\}.
\end{aligned}
\]
The maximality of $\mathcal T(u)\log(1+z)\phi$,
together with $\phi(p)=1$ and the bounds for
$\mathcal T$, yields
\[
\begin{aligned}
 \log(1+z(p))
 &\le
 \frac{\mathcal T(u(q))}{\mathcal T(u(p))}
       \log(1+z(q))\phi(q)\\
 &\le2^{\frac{1}{b}}\log(1+z(q))\phi(q)\\
 &\le C(L+1)^2
 \max\left\{
 \Lambda(R_0,\mu,\kappa,\sigma_h),
 [\Lambda(R_0,\mu,\kappa,\sigma_h)]^{\frac{1}{2}}
 \right\}.
\end{aligned}
\]
Here $C=C(n,m_1,m_2)$.
Exponentiating proves \eqref{eq:d-grad}
at radius $R_0$.

\subsection{Proof of part \textup{(v)}}

When $f\equiv0$, the terms involving $f$ and its
derivatives vanish. 
Choose
\[
 \mathcal T(u)
 =\left(L+u-\inf\nolimits_{B_R(p)}u\right)^{-\frac{1}{b}},\quad F(z)=\log(1+z),\quad \alpha=2,
\]
where $b>0$ will be fixed below. Then
\[
 (2L)^{-\frac{1}{b}}\le\mathcal T\le L^{-\frac{1}{b}},
\]
and
\[
 \frac{\mathcal T'}{\mathcal T}
 =-\frac{1}{b\left(L+u-\inf\nolimits_{B_R(p)}u\right)},\quad
 \frac{\mathcal T''}{\mathcal T}
   -\left(\frac{\mathcal T'}{\mathcal T}\right)^2
 =b\left(\frac{\mathcal T'}{\mathcal T}\right)^2.
\]

At $q$, \eqref{eq-gradz-general} gives
\[
 \nabla z
 =-W^2\log(1+z)
    \left(
      \frac{\mathcal T'}{\mathcal T}\nabla u
      +\frac{\nabla\phi}{\phi}
    \right).
\]
Using $f\equiv0$ and \eqref{eq-b-PQlog}, the ten terms become
\[
\begin{aligned}
 I_1&=0,\quad I_2=0,\quad I_3=0,\quad I_4=0,\\
 I_5
 &=b\left(\frac{\mathcal T'}{\mathcal T}\right)^2
                     \frac{z}{W^2},\\
 I_6
 &=\left(-1+\frac{\log(1+z)}{2W^2}\right)
       \left|
         \frac{\mathcal T'}{\mathcal T}\nabla u
         +\frac{\nabla\phi}{\phi}
       \right|_g^2,\\
 I_7
 &=\frac{1}{W^2}
       \left(
         \frac{\mathcal T'}{\mathcal T}z
         +\left\langle\nabla u,
                       \frac{\nabla\phi}{\phi}\right\rangle_g
       \right)^2,\\
 I_8&=0,\\
 I_9
 &=2(n-1)\kappa^2\frac{z}{W^2\log(1+z)}
 \le2(n-1)\kappa^2,\\
 I_{10}
 &=C_{n,2}K(R_0,\mu)\phi^{-1}.
\end{aligned}
\]
For every $\varepsilon_4>0$, the calculation leading to
\eqref{eq-d-I5672} remains valid. Hence
\eqref{eq:master} gives
\begin{equation}\label{eq-part5-1}
\begin{aligned}
 &\left(b-1-\varepsilon_4+\frac{\log(1+z)}{2}\right)
       \left(\frac{\mathcal T'}{\mathcal T}\right)^2
       \frac{z}{W^2}\\
 &\quad\le
 \left(1+\frac{1}{\varepsilon_4}\right)
       \left|\frac{\nabla\phi}{\phi}\right|_g^2
 +2(n-1)\kappa^2
 +C_{n,2}K(R_0,\mu)\phi^{-1}.
\end{aligned}
\end{equation}
Choose
\[
 \varepsilon_4=1,\quad b=3.
\]
Inequality~\eqref{eq-part5-1} and Lemma \ref{lem:cutoff} then imply
\[
 \left(1+\frac{\log(1+z)}{2}\right)
       \left(\frac{\mathcal T'}{\mathcal T}\right)^2
       \frac{z}{W^2}
 \le C_n\Lambda(R_0,\mu,\kappa,0)\phi^{-1}.
\]
Since $b=3$, we have
\[
 \left|\frac{\mathcal T'}{\mathcal T}\right|^{-1}
 =3\left(L+u-\inf\nolimits_{B_R(p)}u\right)
 \le6L.
\]
It follows from \eqref{eq:log-elementary} that
\[
 \log(1+z)
 \le C_nL^2\Lambda(R_0,\mu,\kappa,0)\phi^{-1}.
\]
Multiplying by $\phi(q)$ gives
\[
 \log(1+z(q))\phi(q)
 \le C_nL^2\Lambda(R_0,\mu,\kappa,0).
\]
By maximality and the bounds for $\mathcal T$,
\[
\begin{aligned}
 \log(1+z(p))
 &\le
 \frac{\mathcal T(u(q))}{\mathcal T(u(p))}
       \log(1+z(q))\phi(q)\\
 &\le2^{\frac{1}{3}}\log(1+z(q))\phi(q)\\
 &\le C_nL^2\Lambda(R_0,\mu,\kappa,0).
\end{aligned}
\]
Exponentiating proves \eqref{eq:e-grad}
at radius $R_0$.



The passage to the original radius is common to all five parts. 
The constants are independent of $R_0$,
while $R$ and $L=\osc_{B_R(p)}u$ remain fixed
as $R_0\uparrow R$.
Since
\[
 K(R_0,\mu)\longrightarrow K(R,\mu),\quad
 \Lambda(R_0,\mu,\kappa,\sigma_h)
 \longrightarrow\Lambda(R,\mu,\kappa,\sigma_h)
 \quad\text{as }R_0\rightarrow R,
\]
letting $R_0\rightarrow R$ gives all the asserted estimates
and completes the proof of Theorem~\ref{thm-main}.

\section{Proofs of Corollaries~\ref{cor1} and~\ref{cor2}} 
\label{s4}
\begin{proof}[Proof of Corollary~\ref{cor1}]
We verify the Ricci lower bound and the distance
Hessian comparison required in Theorem~\ref{thm-main}.

Fix $q\in B_R(p)$ and a unit vector $X\in T_qM$.
Choose a local orthonormal frame normal at $q$ with
$e_1=X$. By the curvature convention of
Section~\ref{s21},
\[
 \Ric_g(X,X)
 =\sum_{i=2}^n\mathcal R(X,e_i,e_i,X)
 =\sum_{i=2}^n\Sec_g([X\wedge e_i])
 \ge-(n-1)\rho^2.
\]
Hence
\[
 \Ric_g\ge-(n-1)\rho^2g
 \quad\text{on }B_R(p).
\]

For
$q\in B_R(p)\setminus(\{p\}\cup\Cut(p))$,
the smooth Hessian comparison
\cite[Theorem~6.4.3]{Petersen2016} gives
\[
 \Hess r
 \le c_\rho(r(q))
       \bigl(g-\dd r\otimes\dd r\bigr)
 \quad\text{at }q.
\]
Thus \eqref{eq-comp2} holds with $\mu=\rho$.

Now let $q\in B_R(p)\cap\Cut(p)$ and fix
$\varepsilon>0$. Choose a unit-speed minimizing
geodesic
\[
 \gamma:[0,r(q)]\longrightarrow M,\quad
 \gamma(0)=p,\quad \gamma(r(q))=q.
\]
By continuity of $c_\rho$ on $(0,\infty)$, we may
choose $0<\delta<r(q)$ such that
\[
 c_\rho(r(q)-\delta)
 \le c_\rho(r(q))+\varepsilon.
\]
The smoothness argument in the proof of
\cite[Lemma~7.1.9]{Petersen2016} shows that
\[
 q\notin\Cut(\gamma(\delta)).
\]
Consequently,
\[
 r_{q,\varepsilon}(x)
 =\delta+d_g(\gamma(\delta),x)
\]
is smooth on a sufficiently small neighborhood of $q$.
The triangle inequality and the minimizing property
of $\gamma$ give
\[
 r_{q,\varepsilon}\ge r,\quad
 r_{q,\varepsilon}(q)=r(q),\quad
 |\nabla r_{q,\varepsilon}(q)|_g=1.
\]
Thus $r_{q,\varepsilon}$ is a smooth upper support
for $r$ at $q$.

Moreover,
\[
 q\in B_{R-\delta}(\gamma(\delta))
 \subset B_R(p),
\]
so the sectional curvature bound is available along
the minimizing segment from $\gamma(\delta)$ to $q$.
Applying the smooth Hessian comparison to
$d_g(\gamma(\delta),\cdot)$ gives, at $q$,
\[
 \Hess r_{q,\varepsilon}
 \le c_\rho(r(q)-\delta)
       \bigl(
         g-\dd r_{q,\varepsilon}
              \otimes\dd r_{q,\varepsilon}
       \bigr).
\]
For every $X\in T_qM$,
\[
\begin{aligned}
 &\bigl(
   g-\dd r_{q,\varepsilon}\otimes\dd r_{q,\varepsilon}
  \bigr)(X,X)=
 |X|_g^2
 -\langle\nabla r_{q,\varepsilon}(q),X\rangle_g^2
 \ge0.
\end{aligned}
\]
Our choice of $\delta$ therefore yields
\[
 \Hess r_{q,\varepsilon}
 \le\bigl(c_\rho(r(q))+\varepsilon\bigr)
       \bigl(
         g-\dd r_{q,\varepsilon}
              \otimes\dd r_{q,\varepsilon}
       \bigr)
 \quad\text{at }q.
\]
This implies \eqref{eq-comp3}, and hence
Definition~\ref{def-compare} holds with $\mu=\rho$.

Both geometric hypotheses of Theorem~\ref{thm-main}
are now verified with $\kappa=\mu=\rho$.
Substituting these values into \eqref{eq-scales} gives
\[
 \Lambda(R,\rho,\rho,\sigma_h)
 =\frac{1+\rho R}{R^2}+(n-1)\rho^2+n\sigma_h^2.
\]
The corollary follows from Theorem~\ref{thm-main}.
\end{proof}

\begin{proof}[Proof of Corollary~\ref{cor2}]
The preceding proof supplies the distance Hessian
comparison with $\mu=\rho$ on every geodesic ball.
Since $\Ric_g\ge0$ and \eqref{eq-f} holds with
$\sigma_h=0$, Theorem~\ref{thm-main} applies with
\[
 \kappa=0,\quad \mu=\rho,\quad \sigma_h=0.
\]

Fix an arbitrary $q\in M$. For $R>2d_g(p,q)$,
\[
 B_{\frac{R}{2}}(q)\subset B_R(p),\quad
 \osc_{B_{\frac{R}{2}}(q)}u\le L.
\]
Completeness and continuity of $u$ ensure that these
oscillations are finite for every finite $R$.
We apply Theorem~\ref{thm-main} on
$B_{\frac{R}{2}}(q)$ and use $L$ to bound its
oscillation. The geometric factor is
\[
 \Lambda\left(\frac{R}{2},\rho,0,0\right)
 =K\left(\frac{R}{2},\rho\right)
 =\frac{4}{R^2}+\frac{2\rho}{R}
 \longrightarrow0.
\]
In particular, for $R\ge1$,
\[
 \Lambda\left(\frac{R}{2},\rho,0,0\right)
 \le
 \begin{cases}
 \frac{4+2\rho}{R},&\rho>0,\\
 \frac{4}{R^2},&\rho=0.
 \end{cases}
\]
All constants in the gradient estimates below are
independent of $R$.

\smallskip
\noindent\textup{(i)}
Under \eqref{A1}, \eqref{eq:a-grad}
gives
\[
 |\nabla u(q)|_g
 \le C
 \left[
   \Lambda\left(\frac{R}{2},\rho,0,0\right)
 \right]^{\frac{1}{2\theta}}
 \longrightarrow0.
\]

\smallskip
\noindent\textup{(ii)}
Under \eqref{A2}, for all sufficiently large $R$,
\[
 \Lambda\left(\frac{R}{2},\rho,0,0\right)\le1.
\]
Since $\theta>1$, estimate \eqref{eq:b-grad} and
$e^{Ct}-1\le Ce^Ct$ for $0\le t\le1$ give
\[
 |\nabla u(q)|_g^2
 \le C
 \left[
  \Lambda\left(\frac{R}{2},\rho,0,0\right)
 \right]^{\frac{1}{2\theta-1}}
 \longrightarrow0.
\]

\smallskip
\noindent\textup{(iii)}
The oscillation assumption implies
\[
 (L+1)\Lambda\left(\frac{R}{2},\rho,0,0\right)
 \le
 \begin{cases}
 (4+2\rho)\frac{L+1}{R},&\rho>0,\\
 4\frac{L+1}{R^2},&\rho=0,
 \end{cases}
 \longrightarrow0.
\]
Taking $\beta=\frac{1}{2}$ in \eqref{eq:c-grad},
we obtain
\[
\begin{aligned}
 |\nabla u(q)|_g\le\max\Bigg\{
 &C_1
 \left[
  (L+1)\Lambda\left(\frac{R}{2},\rho,0,0\right)
 \right]^{\frac{1}{2\theta}},\ \ C_2
 \left[
  (L+1)\Lambda\left(\frac{R}{2},\rho,0,0\right)
 \right]^2
 \Bigg\}
 \longrightarrow0.
\end{aligned}
\]

\smallskip
\noindent\textup{(iv)}
For all sufficiently large $R$,
$\Lambda(\frac{R}{2},\rho,0,0)\le1$.
Taking logarithms in \eqref{eq:d-grad} therefore gives
\[
 \log(1+|\nabla u(q)|_g^2)
 \le C(L+1)^2
 \left[
  \Lambda\left(\frac{R}{2},\rho,0,0\right)
 \right]^{\frac{1}{2}}.
\]
The right-hand side tends to zero, since
\[
\begin{aligned}
 &(L+1)^2
 \left[
  \Lambda\left(\frac{R}{2},\rho,0,0\right)
 \right]^{\frac{1}{2}}\le
 \begin{cases}
 (4+2\rho)^{\frac{1}{2}}
 \left(\frac{L+1}{R^{\frac{1}{4}}}\right)^2,
       &\rho>0,\\
 4^{\frac{1}{2}}
 \left(\frac{L+1}{R^{\frac{1}{2}}}\right)^2,
       &\rho=0,
 \end{cases}
 \longrightarrow0.
\end{aligned}
\]

\smallskip
\noindent\textup{(v)}
 Taking logarithms in \eqref{eq:e-grad}, we conclude that
\[
\begin{aligned}
\log(1+|\nabla u(q)|_g^2)
&\le CL^2
\Lambda\left(\frac R2,\rho,0,0\right)\\
&\le C
\begin{cases}
(4+2\rho)\left(\frac L{R^{\frac12}}\right)^2,
&\rho>0,\\[4pt]
4\left(\frac LR\right)^2,
&\rho=0,
\end{cases}\longrightarrow0.
\end{aligned}
\]


In each case, letting $R\to\infty$ gives
$\nabla u(q)=0$. Since $q\in M$ is arbitrary,
$\nabla u\equiv0$ on $M$. 
\end{proof}




\end{document}